\theoremstyle{plain}
\newtheorem{teo}{Theorem}[section]
\newtheorem{prop}[teo]{Proposition}
\newtheorem{cor}[teo]{Corollary}
\newtheorem{lem}[teo]{Lemma}
\theoremstyle{definition}
\newtheorem{defi}[teo]{Definition}
\theoremstyle{remark}
\newtheorem{rem}[teo]{Remark}
\newcommand{\z}  {\mathbf{Z}}
\newcommand{\ci} {\mathbf{C}}
\newcommand{\qu} {\mathbf{Q}}
\newcommand{\letr}[1] {\mathcal{1}}
\newcommand{\gq} {G_{\mathbf{Q}}}
\newcommand{\f } {\mathbf{F}}
\newcommand{\fbar} {\overline{\f}_p}
\DeclareMathOperator{\gl}{GL}
\DeclareMathOperator{\pgl}{PGL}
\DeclareMathOperator{\sll}{SL}
\DeclareMathOperator{\enn}{End}
\DeclareMathOperator{\M}{M}
\DeclareMathOperator{\fr}{Frob}
\DeclareMathOperator{\tr}{tr}
\DeclareMathOperator{\spe}{Spec}
\DeclareMathOperator{\Hom}{Hom}
\def\p{\mathfrak{p}}
\begin{document}

\title{Computing the number of certain Galois representations mod $p$}
\author{Tommaso Giorgio Centeleghe}

\address{Univesit\"at Duisburg--Essen, Fakult\"at f\"ur Mathematik.}
\email{{\tt tommaso.centeleghe@uni-due.de}}

\maketitle

\begin{abstract} Using the link between mod $p$ Galois representations of $\qu$
and mod $p$ modular forms established by Serre's Conjecture, we compute, for every prime
$p\leq 1999$, a lower bound for the number of isomorphism classes of continuous Galois representation
of $\qu$ on a two--dimensional vector space over $\fbar$ which are irreducible, odd, and unramified
outside $p$. 
\end{abstract}

\section{Introduction}

Let $p$ be a prime number and $\fbar$ an algebraic closure of $\f_p$, the finite
field with $p$ elements. Let $\gq$ denote the absolute Galois group of
$\qu$, with respect to the choice of an algebraic closure $\overline\qu$ of
$\qu$. An important consequence of (the level one case of) Serre's Modularity
Conjecture is the following finiteness theorem

\begin{teo}\label{fnt} There are only finitely many isomorphism classes of continuous
representations $\rho:\gq\rightarrow\gl_2(\fbar)$ that are irreducible, odd,
and unramified outside $p$.
\end{teo}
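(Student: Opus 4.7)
The plan is to invoke Serre's Modularity Conjecture (the level one case now being a theorem of Khare--Wintenberger) to identify each such $\rho$ with the mod $p$ Galois representation attached to a normalized cuspidal eigenform over $\fbar$, and then to count eigenforms using finite--dimensionality.

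First I would attach to $\rho$ its Serre level $N(\rho)$ and Serre weight $k(\rho)$. The level $N(\rho)$ is defined purely in terms of the action of inertia at primes $\ell\neq p$, so the assumption that $\rho$ is unramified outside $p$ forces $N(\rho)=1$. Serre's Conjecture then provides a normalized cuspidal eigenform $f$ in the space $S_{k(\rho)}(\sll_2(\z);\fbar)$ such that $\rho\cong\rho_f$; in particular $f$ has level $1$.

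Next I would bound the weight. From its construction via the restriction of $\rho$ to the inertia subgroup at $p$, the integer $k(\rho)$ lies in a finite interval depending only on $p$, and after twisting $\rho$ by a suitable power of the mod $p$ cyclotomic character --- an operation that preserves being irreducible, odd, and unramified outside $p$, and which only introduces at most $p-1$ choices --- one may further arrange that $2\le k(\rho)\le p+1$. For each weight $k$ in this finite range the space $S_k(\sll_2(\z);\fbar)$ is a finite--dimensional $\fbar$--vector space and therefore supports only finitely many systems of Hecke eigenvalues. Since $\rho$ is irreducible, it is determined up to isomorphism by the traces and determinants of Frobenius at primes $\ell\neq p$, hence by the Hecke eigensystem of $f$; combining these bounds yields the desired finiteness.

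The only substantive input is Serre's Conjecture itself, which constitutes the main (and indeed the only) real obstacle; it has been overcome by the deep work of Khare and Wintenberger. The remaining ingredients --- reduction to level $1$, bounding the Serre weight modulo twist, and finite--dimensionality of spaces of modular forms --- are essentially formal once the conjecture is available.
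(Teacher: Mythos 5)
Your proposal is correct and follows essentially the same route as the paper: Serre's Conjecture in level one (Khare's theorem) identifies the representations with mod $p$ Hecke eigensystems from level-one cusp forms, the Tate--Serre twisting result (Theorem \ref{tatetwist}) confines the weight to $2\le k\le p+1$ at the cost of a factor $p-1$, and finite-dimensionality of $\M^0_k(\fbar)$ plus the Brauer--Nesbitt/Chebotarev determination of an irreducible $\rho$ by its Frobenius traces and determinants completes the count, exactly as in the derivation of the bound $U(p)$ in Section \ref{computations}.
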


Continuity in this context means that $\rho$ has open kernel, compactness of $\gq$
implies that $\rho$ has finite image, and there exists a finite extension $\f(\rho)$ of $\f_p$ for
which a model of $\rho$ over $\f(\rho)$ can be found.
The statement obtained from the Theorem replacing $\fbar$ by a finite subfield $\f$
was known to be true classically as a consequence of the Hermite--Minkowski Theorem.
The point of Theorem \ref{fnt} is that for every prime $p$ one can find a finite subfield $\f$ of
$\fbar$ so that {\it all} the representations considered can be realized over $\f$.

Let $R(p)$ denote  the non--negative integer defined by Theorem \ref{fnt}. From the refined version
of Serre's Conjecture one immediately sees that $R(p)$ is bounded from above by a function $U(p)$ whose
behaviour with $p$ is $p^3/48+O(p^2)$ (cf. section \ref{computations}). Professor Khare
had raised the question of whether this upper bound give the correct asymptotic of $R(p)$ (cf. \cite{Kh1}, \S $8$), in our University of Utah
thesis we conjectured a positive answer to his question.

The conjecture predicts that congruences modulo $p$ between characteristic zero eigenforms of weight $k\leq p+1$
are ``rare" and that, moreover, the mod $p$ Galois representations of $\qu$ associated to classical cusp forms of level one tend to be irreducible and
wildly ramified at $p$.

In the computations presented here we collected a lower bound $L(p)$ of $R(p)$ for all primes $p\leq 1999$. The table at the
end of the paper contains the values of $L(p)$ that we had found, together with the upper bound $U(p)$ and with the ratio $(U(p)-L(p))/p^2$.
In the range explored the table shows a tendency for $R(p)$ to remain close to the upper bound $U(p)$.

Using the link between Galois representations and modular forms established by Serre's Conjecture, we computed $L(p)$ by estimating
the number of systems of Hecke eigenvalues arising from modular forms mod $p$ of level one. The method adopted is based on the
analysis of a {\it single} Hecke operator $T_n$ to deduce information about the mod $p$ arithmetic of the whole Hecke ring $\mathbf{T}^0_k$.
The software used to perform the computations is MAGMA.


In section \ref{gener} standard results from the theory of modular forms and Galois representations are recalled,
the method used in the computations is explained in details in section \ref{computations}, and the remaining two sections provide the commutative
algebra needed for the computations.

The work presented in this paper started within my thesis project, I would like to express my gratitude to professor Khare for suggesting
this direction of research as well as for the invaluable attention that I have received from him. This paper benefitted from many interesting
conversation and advices that I received from professors Gebhard B\"ockle and Gabor Wiese during the past year. I am grateful to
them for their important help. I would like to thank professor Ulrich G\"ortz for letting me use the computer Pluto at the Institute for Experimental
Mathematics in Essen which performed the computations. I want to thank Craig Citro, who explained to me a lot on computing with modular forms.
Finally, the help of Panagiotis Tsaknias with the implementation of the algorithm and the production of the table was vital for me. I heartily thank him
for his kindness and availability.

\section{Generalities}\label{gener}

In this preliminary section we adopt a very utilitarian point of view and recall all the results that we need
from the theory of modular forms (both classical and mod $p$) and their associated mod $p$ Galois representations.
For more details on modular forms on $\sll_2(\z)$ and their Hecke operators the reader can consult
\cite{La}. For an exposition of classical theorems linking mod $p$ modular forms to Galois representations, as well
as some more recent important development, the papers \cite{Ed} and \cite{Gr} are standard references.
We prefer not to say anything about Serre's Conjecture here. Instead, we will constantly keep this important
theorem in the back of our mind as the motivation for studying systems of Hecke eigenvalues arising from
modular forms mod $p$.
%

Let $\M_k$ denote the space of classical modular forms of weight $k$ on the group
$\sll_2(\z)$, and let $\M_k^0$ be its cuspidal subspace. Denote by $\M_k(\z)$ (resp. $\M^0_k(\z)$)
the submodule of $\M_k$ (resp. $\M^0_k$) given by forms $f$ whose expansion at infinity
has integer coefficients. It is a basic fact that these submodules define integral structures, meaning that
the natural inclusions $\M_k(\z)\subset\M_k$ and $\M^0_k(\z)\subset\M^0_k$ induce isomorphisms
$\M_k(\z)\otimes\ci\simeq\M_k$ and $\M^0_k(\z)\otimes\ci\simeq\M^0_k$.

Let $p$ be a prime number. Following \cite{Se6}, we define the space $\M_k(\f_p)$ of modular forms mod $p$ of
weight $k$ on $\sll_2(\z)$ to be $\M_k(\z)/p\M_k(\z)$, similarly the cuspidal subspace is $\M^0_k(\f_p)=\M^0_k(\z)/p\M^0_k(\z)$.
If $p>3$, then these definitions agree with the geometric definitions \`a la Katz (\cite{Ka}, Theorem $1.8.2$).

For an integer $n>0$, the $n$--th Hecke operator on the space $\M_k^0$ is denoted by $T_n$, without
reference to the weight $k$. The Hecke operators all commute with each other, and
if $\ell_1,\ldots,\ell_r$ are the primes dividing $n$, the operator $T_n$ can be written
as a polynomial in the $T_{\ell_1},\ldots,T_{\ell_r}$ with coefficients in $\z$.

By definition, the Hecke ring $\mathbf{T}^0_k$ is the subring of $\enn_\ci(\M_k^0)$ generated by all the operators
$T_n$, for $n>0$, and the Hecke algebra ${(\mathbf{T}^0_k)}_\ci$
is the smallest $\ci$--subalgebra of $\enn_\ci(\M_k^0)$ containing all the $T_n$'s. 

For every $n$, the operator $T_n$ is a semi--simple endomorphism preserving the integral structure $\M^0_k(\z)$, moreover the algebra
${(\mathbf{T}^0_k)}_\ci$ acts on $\M^0_k$ with multiplicity one. As a consequence of these two facts one has

\begin{teo}\label{Heckering} There exist number fields $K_i$, for $1\leq i\leq r$, with rings of integers $O_i$, and an injective ring homomorphism
$$\theta_k:\mathbf{T}^0_k\longrightarrow\prod_{1\leq i\leq r} O_i$$
which has finite cokernel. The rank of $\mathbf{T}^0_k$ as $\z$--module is equal
to $\dim_\ci(\M^0_k)$.
\end{teo}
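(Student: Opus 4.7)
The plan is to exploit the three structural inputs supplied just before the statement: the Hecke operators $T_n$ preserve the lattice $\M^0_k(\z)$, they are semi--simple and pairwise commuting, and the complex algebra $(\mathbf{T}^0_k)_\ci$ acts on $\M^0_k$ with multiplicity one. First I would read off that $\mathbf{T}^0_k$, being a subring of $\enn_\z(\M^0_k(\z))$, is a finitely generated, torsion--free $\z$--module. Its rank coincides with $\dim_\qu\bigl(\mathbf{T}^0_k\otimes\qu\bigr)$, and after extending scalars to $\ci$ this rank equals $\dim_\ci (\mathbf{T}^0_k)_\ci$.

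Next I would identify $(\mathbf{T}^0_k)_\ci$ explicitly. Multiplicity one means that the simultaneous eigenspaces of the $T_n$ on $\M^0_k$ are one--dimensional; since the $T_n$ are semi--simple and commute, they are simultaneously diagonalizable and the resulting basis of normalized eigenforms provides an isomorphism $(\mathbf{T}^0_k)_\ci\simeq\ci^r$, where $r=\dim_\ci \M^0_k$. This already yields the rank statement at the end of the theorem.

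I would then descend this decomposition to $\qu$. Since $(\mathbf{T}^0_k)_\qu\subset (\mathbf{T}^0_k)_\ci\simeq\ci^r$ is a $\qu$--subalgebra of a reduced finite--dimensional algebra, it is itself a commutative, reduced, finite--dimensional $\qu$--algebra, hence by the structure theorem for such algebras $(\mathbf{T}^0_k)_\qu\simeq\prod_{i=1}^r K_i$ with $K_i$ number fields. (The factor count must again be $r$ since after extending scalars to $\ci$ we recover $\ci^r$.) This produces the injection
$$\theta_k:\mathbf{T}^0_k\hookrightarrow (\mathbf{T}^0_k)_\qu\simeq\prod_{i=1}^r K_i.$$
To see that the image lands in $\prod O_i$, I would note that each $T_n$ acts on the $\z$--lattice $\M^0_k(\z)$, so its characteristic polynomial has coefficients in $\z$ and its $K_i$--components are roots of a monic polynomial in $\z[x]$, hence algebraic integers; by multiplicativity this extends to every element of $\mathbf{T}^0_k$.

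Finally, $\theta_k$ becomes an isomorphism after tensoring with $\qu$, and $\mathbf{T}^0_k$ and $\prod_i O_i$ are finitely generated $\z$--modules of the same rank $r$, so the cokernel is a finitely generated torsion $\z$--module, hence finite. The main point to get right, and where I expect any real work, is the semi--simplicity argument producing the product decomposition of $(\mathbf{T}^0_k)_\qu$; everything else is bookkeeping around the integrality of eigenvalues and the rank count.
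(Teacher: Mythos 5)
Your argument is essentially the one the paper intends: Theorem \ref{Heckering} is stated there without proof, as a direct consequence of the two facts you use (preservation of the lattice $\M^0_k(\z)$ together with semi--simplicity, and multiplicity one), and your chain --- simultaneous diagonalization, $(\mathbf{T}^0_k)_\ci\simeq\ci^{\dim\M^0_k}$, descent to a reduced finite $\qu$--algebra, integrality of the components, equal ranks hence finite cokernel --- is the standard way to fill that in. It is correct in all essentials.

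One claim in your write-up is wrong, though harmlessly so: the number of field factors of $(\mathbf{T}^0_k)_\qu$ is \emph{not} $\dim_\ci\M^0_k$ in general. Extending scalars to $\ci$ turns each factor $K_i$ into $[K_i:\qu]$ copies of $\ci$, so what you recover from $(\mathbf{T}^0_k)_\ci\simeq\ci^{\dim\M^0_k}$ is only $\sum_i[K_i:\qu]=\dim_\ci\M^0_k$; the number of factors equals the number of Galois orbits of eigensystems, which the paper notes is $1$ in every known example (Maeda's conjecture). Since the theorem does not assert a particular value for the number of factors, and the rank statement only needs $\sum_i[K_i:\qu]=\dim_\ci\M^0_k$, this slip does not affect the conclusion. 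You should also make explicit why the map $(\mathbf{T}^0_k)_\ci\to\ci^{\dim\M^0_k}$ is \emph{surjective}: multiplicity one forces the characters attached to the distinct eigenforms to be pairwise distinct, and surjectivity then follows (e.g.\ from the Chinese Remainder Theorem for the reduced commutative algebra $(\mathbf{T}^0_k)_\ci$).
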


A {\it system of eigenvalues} arising from $\M^0_k$ is a collection $(a_\ell)$ of complex numbers, indexed by all the primes $\ell$,
so that there exists a nonzero form $f\in\M^0_k$ for which $T_\ell(f)=a_\ell f$, for all $\ell$. One can show that there is a bijection
between systems of eigenvalues arising from $\M^0_k$ and ${\rm Hom}_{\rm rings}(\mathbf{T}^0_k,\ci)$.

If $\theta_{k,i}:\mathbf{T}^0_k\rightarrow O_i$ denotes the composition of $\theta_k$ with the projection onto $O_i$, then
all the systems of eigenvalues arising from $\M^0_k$ are described by $(\sigma(\theta_{k,i}(T_\ell)))$, where $1\leq i\leq r$ and
$\sigma\in\gq$ is any element (each $K_i$ is considered as a subfield of $\ci$).

Let us remark that in any known example $r$ is equal to $1$ and the systems of eigenvalues arising from $\M^0_k$ form a unique Galois orbit.
Maeda's conjecture is the statement that this happens for all $k$.

The Hecke ring $\mathbf{T}^0_k$ acts naturally on the space $\M^0_k(\f_p)$ and, by extension of scalars, on $\M^0_k(\f_p)\otimes\fbar$, simply
denoted by $\M^0_k(\fbar)$ in what follows. A system of eigenvalues mod $p$ arising from $\M^0_k(\fbar)$ is a collection $\Phi={(a_\ell)}_{\ell\neq p}$
of elements $a_\ell\in\fbar$, indexed by primes $\ell\neq p$, so that there exists a nonzero form $f\in\M^0_k(\fbar)$ with $T_\ell(f)=a_\ell f$.

If $\Phi={(a_\ell)}_{\ell\neq p}$ is any system of eigenvalues mod $p$, one can find a nonzero form $f\in\M^0_k(\fbar)$ giving rise to
$\Phi$ that is an eigenvector for $T_p$. Therefore there is a ring homomorphism $\lambda_\Phi:\mathbf{T}^0_k\rightarrow\fbar$
defined by $T(f)=\lambda_\Phi(T)f$, for $T\in\mathbf{T}^0_k$. The $p$--th eigenvalue $a_p$, and hence the morphism $\lambda_\Phi$, is not unique in
general, for this reason we had preferred to not include it in the definition of eigensystem mod $p$. However it can be shown that uniqueness holds
if the weight $k$ is not too large with respect to $p$:

\begin{prop}\label{kleq2p-1} If $k\leq 2p-1$ then there is a natural bijection between mod $p$ systems of eigenvalues arising from $\M^0_k(\fbar)$
and the set of $\fbar$--valued points of $\spe(\mathbf{T}^0_k)$.
\end{prop}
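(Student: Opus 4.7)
The plan is to analyze the natural forgetful map
\[
\pi\colon\Hom_{\rm rings}(\mathbf{T}^0_k,\fbar)\longrightarrow\{\text{systems of eigenvalues mod }p\},\qquad \lambda\mapsto(\lambda(T_\ell))_{\ell\neq p},
\]
and show it is a bijection, since $\fbar$-valued points of $\spe(\mathbf{T}^0_k)$ are the same as elements of $\Hom_{\rm rings}(\mathbf{T}^0_k,\fbar)$. Surjectivity of $\pi$ is already contained in the paragraph preceding the proposition: every mod $p$ eigensystem $\Phi$ is realized by a form that can additionally be taken to be a $T_p$-eigenvector, which yields a ring map $\lambda_\Phi$ mapping to $\Phi$ under $\pi$. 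The content of the proposition is therefore the injectivity of $\pi$.

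Suppose $\lambda_1,\lambda_2:\mathbf{T}^0_k\to\fbar$ compose with $\pi$ to the same eigensystem. Because every $T_n$ with $(n,p)=1$ is a $\z$-polynomial in the $T_\ell$ with $\ell\mid n$ and $\ell\neq p$, we have $\lambda_1(T_n)=\lambda_2(T_n)$ for all such $n$. Moreover the Hecke recursion $T_{p^{j+1}}=T_pT_{p^j}-p^{k-1}T_{p^{j-1}}$ reduces modulo $p$ (for $k\geq 2$) to $T_{p^{j+1}}=T_pT_{p^j}$, so $\lambda_i(T_{p^j})=\lambda_i(T_p)^j$. Hence $\lambda_1=\lambda_2$ as soon as we know that the $T_p$-eigenvalue of a normalized eigenform in $\M^0_k(\fbar)$ is determined by its eigenvalues at primes $\ell\neq p$.

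To prove this, assume for contradiction that $f_1,f_2\in\M^0_k(\fbar)$ are normalized eigenforms with identical $T_\ell$-eigenvalues for all $\ell\neq p$ but distinct $T_p$-eigenvalues. Writing $f_i=\sum a_n^{(i)} q^n$, the multiplicative relations together with the prime-power recursion express $a_n^{(i)}$ for $(n,p)=1$ as a polynomial in $\{a_\ell^{(i)}\}_{\ell\mid n}$, so these coefficients coincide for $i=1,2$. Consequently $g:=f_1-f_2$ is a nonzero element of $\M^0_k(\fbar)$ (the coefficient at $q^p$ is $a_p^{(1)}-a_p^{(2)}\neq 0$) whose $q$-expansion lies in $\fbar[[q^p]]$, which means $\theta(g)=0$ for the Serre theta operator $\theta=q\,d/dq$.

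The main obstacle is then to verify that $\ker\theta\cap\M^0_k(\fbar)=0$ whenever $k\leq 2p-1$. For this I would invoke the classical structural description of $\ker\theta$ in the graded algebra of mod $p$ modular forms, due to Serre with a geometric treatment in \cite{Ka}: any nonzero $f\in\ker\theta\cap\M_w(\fbar)$ has the form $f=A^jh^p$, where $A\in\M_{p-1}(\fbar)$ is the Hasse invariant, $h\in\M_m(\fbar)$ is nonzero, $j\geq 0$, and $w=pm+j(p-1)$. Because $A$ has $q$-expansion $1$ and $g$ is cuspidal, the form $h$ is forced to be cuspidal as well, so $m\geq 12$ and then $k\geq pm\geq 12p>2p-1$, contradicting $k\leq 2p-1$. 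This rules out the existence of $g$ and completes the proof.
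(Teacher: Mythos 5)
The paper states Proposition~\ref{kleq2p-1} without proof, so there is nothing internal to compare against; judged on its own, your argument is essentially correct and is the standard $\theta$-operator proof, except that you invoke a stronger input than is needed. Reducing the statement to the uniqueness of $a_p$ for a fixed prime-to-$p$ eigensystem, forming $g=f_1-f_2$ with $q$-expansion in $\fbar[[q^p]]$, and killing $g$ by a structure result for $\ker\theta$ is exactly the right strategy. Where you differ from the usual treatment is in citing the full Katz $p$-th power theorem ($f=A^jh^p$): this yields the stronger conclusion that $a_p$ is unique for all $k<12p$ (which is in fact sharp, the first failure coming from the pair $A^{12}\Delta$, $V(\Delta)$ in weight $12p$ when $\Delta$ is ordinary). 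The lighter route, which also explains where the bound $2p-1$ comes from, uses only the weaker fact that a nonzero form in $\ker\theta$ has filtration divisible by $p$: then $w(g)\leq k\leq 2p-1$ forces $w(g)\in\{0,p\}$, and both are impossible for a nonzero cusp form of level one ($w=0$ gives a constant, $w=p$ is odd). Either input is legitimate; yours buys a stronger theorem at the cost of a heavier citation. (For $p=2,3$ the statement is vacuous since $\M^0_k=0$ for $k\leq 2p-1$, so the restriction to $p\geq 5$ implicit in these structure theorems is harmless.)

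One step you should make explicit: your reduction assumes that every $\fbar$-valued point $\lambda$ of $\spe(\mathbf{T}^0_k)$ is realized by a normalized eigenform in $\M^0_k(\fbar)$ on which \emph{all} $T_n$, including $T_p$, act through $\lambda$ (this is needed both for $\pi$ to be well defined and to convert the injectivity of $\pi$ into a statement about pairs of eigenforms). This is true, but it is not formal: it follows from the perfectness mod $p$ of the duality $\mathbf{T}^0_k\times\M^0_k(\z)\to\z$, $(T,f)\mapsto a_1(Tf)$ (equivalently, the Deligne--Serre lifting lemma applied to the maximal ideal corresponding to $\lambda$), together with the observation that such an eigenvector has $a_1\neq 0$ and so can be normalized. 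As written, this is the only genuine gap, and it is a standard one; the rest of the argument is sound.
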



By a classical result of Eichler, Shimura and Deligne, to any mod $p$ system of eigenvalues $\Phi$
one can attach a continuous, semisimple Galois representation
$$\rho_\Phi:\gq\longrightarrow\gl_2(\fbar),$$
which is odd, unramified outside $p$, and that is characterized by
the equalities
\begin{equation}\label{frob}\tr(\rho_\Phi(\fr_\ell))=a_\ell,\hphantom{x}
\det(\rho_\Phi(\fr_\ell))=\ell^{k-1},
\end{equation}
for all primes $\ell\neq p$, where $\fr_\ell$ is a Frobenius element of $\gq$ at $\ell$.

If $h\in\z_{\geq 0}$ is a nonnegative integer then it follows from the theory of the
$\theta$--operator on mod $p$ modular forms that the collection ${(\ell^{h}a_\ell)}_{\ell\neq p}$
is a system of eigenvalues arising from $\M^0_{k+h(p+1)}(\fbar)$, that will be denoted by $\Phi^{(h)}$.
We have
$$\rho_{\Phi^{(h)}}\simeq\chi_p^h\otimes\rho_\Phi,$$
where $\chi_p:\gq\rightarrow\f_p^*$ is the mod $p$ cyclotomic character, and $\Phi^{(h)}$
is usually called the $h$--fold twist of $\Phi$.

The following theorem is due to Tate and Serre. It has been generalized to higher levels
by Jochnowitz (cf. \cite{Jo1}) and Ash--Stevens (cf. \cite{AS}).

\begin{teo}\label{tatetwist} If $\Phi$ is a system of mod $p$ eigenvalues arising from $\M^0_k(\fbar)$,
then there exists a twist $\Phi^{(h)}$ that arises from $\M^0_{k'}(\fbar)$, where
$2\leq k'\leq p+1$.
\end{teo}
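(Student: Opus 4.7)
The plan is to use Serre's $\theta$-operator together with the associated notion of filtration to descend within the cycle of $p-1$ twists to one realized in weight at most $p+1$. Recall Serre's filtration $w(f)$ of a nonzero mod $p$ form $f$: the smallest integer $k'$ such that $f$ has the same $q$-expansion as some form in $\M_{k'}(\fbar)$. The Hasse invariant $A \in \M_{p-1}(\fbar)$ has $q$-expansion $1$, so multiplication by $A$ embeds $\M^0_k(\fbar)$ into $\M^0_{k+p-1}(\fbar)$ without changing $q$-expansions; the theorem is therefore equivalent to the assertion that some twist $\Phi^{(h)}$ is realized by a cuspidal form of filtration at most $p+1$.

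Fix a form $f$ realizing $\Phi$, so that $\theta^h f$ realizes $\Phi^{(h)}$ for every $h$, and set $w_h := w(\theta^h f)$ for $h = 0, 1, \ldots, p-2$. After replacing $\Phi$ by the twist of minimum filtration I may assume $w_0 = \min_h w_h =: k^*$, and I argue by contradiction that $k^* \leq p+1$. The key input is Serre's theorem on how $\theta$ interacts with filtration: $w(\theta g) \leq w(g) + p + 1$, with equality if and only if $p \nmid w(g)$; drops occur exactly when $p \mid w(g)$, and their sizes can be controlled explicitly. Applied along the cycle this gives $w_{i+1} \leq w_i + p + 1$, with equality except when $p \mid w_i$. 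Moreover $\theta^{p-1} f$ has $q$-expansion $\sum_{p \nmid n} a_n q^n$ by Fermat, which realizes $\Phi$ again, so the closing condition $w_{p-1} \geq k^*$ also holds by minimality. Traversing the cycle, the $p-1$ nominal increments of $p+1$ must be cancelled by drops concentrated at indices with $p \mid w_i$; a careful accounting using Serre's quantitative drop estimate shows that, were $k^* \geq p+2$, some intermediate $w_i$ would have to fall strictly below $k^*$, contradicting minimality. Hence $k^* \leq p+1$, and the lower bound $k' \geq 2$ is immediate since $\M^0_0(\fbar) = \M^0_1(\fbar) = 0$.

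The main obstacle is the precise quantitative control of filtration drops at multiples of $p$: the bare inequality $w(\theta g) \leq w(g) + p + 1$ is not enough to force a contradiction, and one needs Serre's full analysis (see \cite{Se6}) showing that the drops along a single $\theta$-cycle cannot cumulatively cancel all the $p+1$ increments unless the starting filtration is already at most $p+1$. This sharp estimate, which ultimately rests on the structure of the Hasse invariant $A$ and its relation to the Eisenstein series $E_{p-1}$, is the technical heart of the proof.
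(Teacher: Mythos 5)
The paper does not prove this statement at all: it records it as a theorem of Tate and Serre and points to \cite{Jo1} and \cite{AS} for the higher--level generalizations, so there is no in--paper argument to measure you against. Your outline does follow the classical Tate--Serre route --- reduce to bounding the minimal filtration in the $\theta$--cycle via the Hasse invariant $A$, then exploit $w(\theta g)\leq w(g)+p+1$ with equality if and only if $p\nmid w(g)$ --- and the individual ingredients you name (filtration, $A$--multiplication preserving $q$--expansions, $\theta^{p-1}f=\sum_{p\nmid n}a_nq^n$, drops occurring exactly at $p\mid w_i$) are all correctly stated.

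The gap is the sentence asserting that ``a careful accounting\ldots shows that, were $k^*\geq p+2$, some intermediate $w_i$ would have to fall strictly below $k^*$.'' That sentence \emph{is} the theorem, and the mechanism you propose does not produce it. Each drop has size $\equiv 0\pmod{p-1}$ below the nominal increment $p+1$ (because filtration is constant mod $p-1$ along a weight class), and summing increments around the periodic cycle gives an identity that is satisfied for \emph{every} value of $\min_i w_i$; minimality only tells you that the step entering the minimum is a drop, i.e.\ $p\mid w_{i_0-1}$, which yields no upper bound on $k^*$. So no purely additive bookkeeping with the stated lemmas can force a value below $k^*$. The actual argument needs a different input and splits on $a_p$. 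If $a_p(f)\neq 0$, one writes $\theta^{p-1}f=f-a_p\,f(q^p)$, notes that $f(q^p)$ is a $p$--th power of filtration $p\,w(f)$, so $w(\theta^{p-1}f)=p\,w(f)$, and compares with $w(\theta^{p-1}f)\leq w(f)+(p-1)(p+1)$ to get $w(f)\leq p+1$ outright, with no twist needed. If $a_p(f)=0$ this comparison is vacuous ($\theta^{p-1}f=f$), and one must invoke the sharpened drop statement (when $p\mid w(g)$ the form $\theta g$ is divisible by $A$, whence $w(\theta g)\leq w(g)+2$) together with Jochnowitz's structural analysis of the cycle to locate a low point of filtration $\leq p+1$. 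Your write--up correctly flags that the quantitative control of drops is the technical heart, but as it stands it defers exactly that heart to a citation while describing an endgame that would not close.
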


In this weight range, and when $\rho_\Phi$ is irreducible, two theorems of Deligne and Fontaine say that
the semisimplification of local representation ${(\rho_\Phi)}_p$, obtained by restricting $\rho_\Phi$ to a decomposition subgroup
$D_p<\gq$ at $p$, is determined by the (unique) $a_p$ eigenvalue associated to $\Phi$ (cf. \cite{Ed}).
We only point out that $a_p\neq 0$ if and only if ${(\rho_\Phi)}_p$ is reducible.

Let $\Phi$ be a system of eigenvalues mod $p$, and assume that $\rho_\Phi$ is irreducible. Since we
are working with modular forms of level one, the local representation ${(\rho_\Phi)}_p$ {\it is} ramified and
one observes that ${(\rho_\Phi)}_p$ is semisimple if and only if it is tamely ramified. There is a criterion
for deciding when this happens.

\begin{teo}\label{tamramtwist} Let $\Phi$ be a system of eigenvalues arising from $\M^0_k(\fbar)$, where $2\leq k\leq p+1$,
and so that $\rho_\Phi$ is irreducible. Then ${(\rho_\Phi)}_p$ is tamely ramified if and only if one of the
following holds

i) $\Phi^{(2-k)}$ arises from $\M^0_{p+3-k}(\fbar)$;

ii) $\Phi^{(1-k)}$ arises from $\M^0_{p+1-k}(\fbar)$.
\end{teo}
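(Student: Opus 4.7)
The approach is to combine the Deligne--Fontaine description of $(\rho_\Phi)_p$ with the theory of $\theta$-cycles on mod $p$ modular forms (as in \cite{Jo1}) and the characterization of weights via companion forms (cf.\ \cite{Ed}). Since $k\leq p+1\leq 2p-1$, Proposition \ref{kleq2p-1} ensures that the eigenvalue $a_p$ associated to $\Phi$ is unambiguously defined, and the semisimplification of the local representation $(\rho_\Phi)_p$ is determined by $k$ and $a_p$. Note moreover that $(\rho_\Phi)_p$ is tamely ramified if and only if the pro-$p$ wild inertia subgroup of $D_p$ acts trivially, equivalently, $(\rho_\Phi)_p|_{I_p}$ is semisimple. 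I would split the argument according to whether $a_p=0$.

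\emph{Case A: $a_p=0$.} By a theorem of Deligne, $(\rho_\Phi)_p$ is irreducible and isomorphic to the induction to $D_p$ of a fundamental character of level $2$; in particular $(\rho_\Phi)_p$ factors through the tame quotient and is automatically tame. Here one must show that at least one of (i), (ii) always holds. The two Serre weights attached to an irreducible local representation are $k$ and the ``mirror" weight $p+3-k$, and the Galois representation associated to a form of weight $p+3-k$ has determinant $\chi_p^{p+2-k}=\chi_p^{3-k}$. Since $\det(\chi_p^{2-k}\otimes\rho_\Phi)=\chi_p^{3-k}$, the natural candidate for realization in weight $p+3-k$ is the twist $\Phi^{(2-k)}$, which gives condition (i).

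\emph{Case B: $a_p\neq 0$.} By Fontaine's theorem, $(\rho_\Phi)_p$ is reducible, sitting in an exact sequence
$$0\fd \chi_p^{k-1}\mu_1\fd (\rho_\Phi)_p \fd \mu_2 \fd 0$$
with $\mu_1,\mu_2$ unramified characters. Tameness is then equivalent to the splitness of this extension on $I_p$. I would then invoke the companion-forms theorem (Gross, cf. \cite{Ed}): the extension splits precisely when the Galois representation admits a realization as a form of one of the low ``companion weights'' $p+1-k$ or $p+3-k$ after twisting by an appropriate power of $\chi_p$. A determinant calculation shows that the relevant twists are exactly $\Phi^{(1-k)}$ (giving a form of weight $p+1-k$, hence condition (ii)) and $\Phi^{(2-k)}$ (giving a form of weight $p+3-k$, hence condition (i)), the two options corresponding to which of $\mu_1,\mu_2$ is the ``dominant" unramified character on the split semisimple pieces.

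The main obstacle is the companion-forms step in Case B. The easier direction is $(i)$ or $(ii)\Rightarrow$ tame: if $\Phi^{(2-k)}$ (resp.\ $\Phi^{(1-k)}$) arises in weight $p+3-k$ (resp.\ $p+1-k$), then Deligne--Fontaine applied in the companion weight forces the inertial structure to be a direct sum of two tame characters, and untwisting recovers tameness for $(\rho_\Phi)_p$. The hard direction, that tameness forces the existence of a companion form in such an unusually low weight, is the classical deep content of the theorem and rests on $p$-adic analysis of the filtration drops in the $\theta$-cycle of $\Phi$, exploiting that a semisimple reducible local representation can be realized by a form whose weight is strictly smaller than the naive $k+(h)(p+1)$.
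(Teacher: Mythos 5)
Your proposal follows essentially the same route as the paper, which likewise does not prove the theorem from scratch but reduces it to the Deligne--Fontaine description of $(\rho_\Phi)_p$, an elementary analysis of the $\theta$--cycle of $\Phi$ (Jochnowitz), and Gross's companion--form tameness criterion in the reducible case. One small correction to your Case B: in the paper's framework condition (i) is equivalent to $(\rho_\Phi)_p$ being irreducible (your Case A) and condition (ii) to its being decomposable, so when $a_p\neq 0$ only (ii) is in play --- the twist $\Phi^{(2-k)}$ into weight $p+3-k$ is not an alternative witness of splitness there.
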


From the description of ${(\rho_\Phi)}_p$ given by the theorems of Deligne and Fontaine, and from an elementary
analysis of the $\theta$--cycle of $\Phi$ (cf. \cite{Jo}), one sees that condition i) in the theorem is equivalent to ${(\rho_\Phi)}_p$
be irreducible. In the hardest case when $({\rho_\Phi})_p$ is reducible, the criterion was conjectured by Serre and proved
by Gross (cf. \cite{Gr}).

\section{Computations}\label{computations}

Let $p$ be any prime number, and let $\mathcal{E}^{\rm Irr}(p)$ be the set of all systems $\Phi={(a_\ell)}_{\ell\neq p}$ of Hecke
eigenvalues mod $p$ arising from $\M^0_k(\fbar)$, for some $k$, so that the associated Galois representation $\rho_\Phi$ is irreducible.
By the level one case of Serre's Conjecture, proved by Khare in $2005$ (cf. \cite{Kh}), the cardinalty of $\mathcal{E}^{\rm Irr}(p)$ is equal to
the integer $R(p)$ defined in the Introduction.

According to Theorem \ref{tatetwist}, any eigensystem $\Phi$ admits a twist in the weight range $2\leq k\leq p+1$. Since
the number of systems of eigenvalues mod $p$ arising from $\M^0_k$ is bounded from above by
$\dim_{\fbar}(\M^0_k(\fbar))=\dim_\ci(\M^0_k)$, we have the following inequality
\begin{equation}\label{upbou} R(p)=|\mathcal{E}^{\rm Irr}(p)|\leq(p-1)\sum_{2\leq k\leq p+1}\dim_\ci(\M^0_k).
\end{equation}
Let $U(p)$ be the upper bound for $R(p)$ given by the inequality above. Using the well--known formulas for $\dim_\ci(\M^0_k)$, one
finds that there is a degree $3$ polynomial $F_\alpha(x)\in\qu[x]$, depending only on the residue class $\alpha$ of $p$ mod $12$, and
unique if $p>3$, so that $F_\alpha(p)=U(p)$. Letting $p$ grow to infinity, one finds that
$$U(p)\sim p^3/48+O(p^2).$$
Professor Khare had raised the question of whether this estimate give the correct asymptotic behaviour with $p$ of $R(p)$
(cf. \cite{Kh1}, \S $8$), in our thesis we were led to conjecture a positive answer to his question.
The difficulty of this conjecture is in producing lower bounds for $R(p)$. In this direction, the best result known today is due to Serre, who showed
in an unpublished correspondence with Khare that $R(p)$ is bounded from below by a function of the type $cp^2+O(p)$, for a constant $c>0$.

In our computation, for all $p\leq 1999$, we obtain a lower bound $L(p)$ for $R(p)$ which is displayed in the table at the end of
the paper together with $U(p)$ and with the ratio $(U(p)-L(p))/p^2$. In the range explored the ratio $(U(p)-L(p))/p^2$ is close to
zero, putting in evidence the tendency for $R(p)$ to approach $U(p)$.

We are going to explain in details how we computed $L(p)$. The theoretical basis of the method is provided by the commutative algebra
explained in the last two sections of this paper. We adopt some of the notation established there. So that, for example, $\delta_R$ denotes
the discriminant of a finite $S_\qu$--ring $R$ (cf. section \ref{discr}).

Let $k$ be an even integer in the range $\{2,\ldots, p+1\}$ and let $\mathcal{E}(p,k)$ be the set of mod $p$ systems of Hecke
eigenvalues $\Phi$ appearing in the space $\M_k^0(\fbar)$.
Consider the following subsets of $\mathcal{E}(p,k)$, defined in terms of the
Galois representations $\rho_\Phi$ associated to $\Phi$.

$\mathcal{E}^{\rm Eis}(p,k)=\{\Phi\in\mathcal{E}(p,k)\hphantom{.}|
\hphantom{.}\rho_\Phi\hphantom{x}{\rm is}\hphantom{x}{\rm reducible}\};$

$\mathcal{E}^{p-\rm tame}(p,k)=\{\Phi\in\mathcal{E}(p,k)-\mathcal{E}^{\rm Eis}(p,k)
\hphantom{.}|\hphantom{.}{(\rho_\Phi)}_p\hphantom{x}
{\rm is}\hphantom{x}{\rm tamely}\hphantom{x}{\rm ramified}\};$

$\mathcal{E}^{p-\rm wild}(p,k)=\{\Phi\in\mathcal{E}(p,k)-\mathcal{E}^{\rm Eis}(p,k)
\hphantom{.}|\hphantom{.}{(\rho_\Phi)}_p\hphantom{x}
{\rm is}\hphantom{x}{\rm wildly}\hphantom{x}{\rm ramified}\};$

$\mathcal{E}^{p-\rm{split}}(p,k)=\{\Phi\in\mathcal{E}(p,k)-\mathcal{E}^{\rm Eis}(p,k)
\hphantom{.}|\hphantom{.}{(\rho_\Phi)}_p\hphantom{x}
{\rm is}\hphantom{x}{\rm decomposable}\};$

$\mathcal{E}^{p-\rm{irr}}(p,k)=\{\Phi\in\mathcal{E}(p,k)-\mathcal{E}^{\rm Eis}(p,k)
\hphantom{.}|\hphantom{.}{(\rho_\Phi)}_p\hphantom{x}{\rm is}
\hphantom{x}{\rm irreducible}\}.$

Notice that there are the following disjoint unions (cf. section \ref{gener}):
$$\mathcal{E}(p,k)=\mathcal{E}^{\rm Eis}(p,k)\cup\mathcal{E}^{p-\rm tame}(p,k)
\cup \mathcal{E}^{p-\rm wild}(p,k),$$
$$\mathcal{E}^{p-\rm tame}(p,k)=\mathcal{E}^{p-\rm{split}}(p,k)\cup
\mathcal{E}^{p-\rm{irr}}(p,k);$$

and, for $k\leq p+1$, there are natural bijections
$$\mathcal{E}^{p-\rm irr}(p,k)\ni\Phi\longleftrightarrow\Phi^{(2-k)}\in\mathcal{E}^{p-\rm irr}(p,p+3-k),$$
$$\mathcal{E}^{p-\rm split}(p,k)\ni\Phi\longleftrightarrow\Phi^{(1-k)}\in\mathcal{E}^{p-\rm split}(p,p+1-k).$$

From Theorem \ref{tamramtwist} we deduce the formula
$$|\mathcal{E}^{\rm Irr}(p)|=(p-1)\sum_{2\leq k\leq p+1}\left[|\mathcal{E}(p,k)|-|\mathcal{E}^{\rm Eis}(p,k)|-\dfrac{1}{2}|\mathcal{E}^{p-\rm{tame}}(p,k)|\right]$$

In order to estimate $|\mathcal{E}^{\rm Irr}(p)|=R(p)$ from below we compute, for $2\leq k\leq p+1$, the values of $|\mathcal{E}(p,k)|$ and $|\mathcal{E}^{\rm Eis}(p,k)|$, and an
upper bound for $|\mathcal{E}^{p-\rm{tame}}(p,k)|$.

\subsection{Computation of $|\mathcal{E}^{\rm Eis}(p,k)|$}

This is the simplest quantity to compute, at least when $k\leq p+1$,
thank to the following criterion.
\begin{prop} Let $p$ be a prime and $k\leq p+1$ an integer so that
$\M_k^0(\f_p)\neq 0$. Then $\mathcal{E}^{\rm Eis}(p,k)$ is not empty
if and only if $p$ divides the numberator of the $k$--th Bernoulli
number $b_k$. Moreover, if $\mathcal{E}^{\rm Eis}(p,k)$ is not empty then
it consists only of the mod $p$ eigensystem $\Phi(E_k)={(1+\ell^{k-1})}_{\ell\neq p}$.
\end{prop}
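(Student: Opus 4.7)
The plan is (i) to show that any $\Phi\in\mathcal{E}^{\rm Eis}(p,k)$ is forced to be $\Phi(E_k)$, pinning down the unique candidate eigensystem, and (ii) to test its actual occurrence in $\M^0_k(\fbar)$ through the mod $p$ reduction of the Eisenstein series $E_k$.

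For step (i), the representation $\rho_\Phi$ is semisimple, odd, unramified outside $p$, with $\det\rho_\Phi=\chi_p^{k-1}$, and splits as a direct sum of two characters $\gq\fd\fbar^*$. Any such character has finite order prime to $p$ and, being unramified outside $p$, factors through $\mathrm{Gal}(\qup/\qu)\simeq\f_p^*$; it is therefore a power of $\chi_p$. Writing $\rho_\Phi\simeq\chi_p^a\oplus\chi_p^b$, the determinant yields $a+b\equiv k-1\pmod{p-1}$, and the Deligne--Fontaine description of $(\rho_\Phi)_p$ recalled in section \ref{gener} constrains the inertial weights to $\{a,b\}\equiv\{0,k-1\}\pmod{p-1}$ in the range $2\leq k\leq p+1$. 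Hence $\rho_\Phi\simeq 1\oplus\chi_p^{k-1}$, and by (\ref{frob}) $\Phi$ equals $\Phi(E_k)$.

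For step (ii), consider the normalized Eisenstein series
\[
G_k=-\frac{b_k}{2k}+\sum_{n\geq 1}\sigma_{k-1}(n)q^n\in\M_k,
\]
an eigenform with $T_\ell$-eigenvalue $\sigma_{k-1}(\ell)=1+\ell^{k-1}$. For $p$ odd and $2\leq k\leq p+1$, one has $p\nmid 2k$; by Clausen--von Staudt, the denominator of $b_k$ is prime to $p$ unless $(p-1)\mid k$, which in this range means $k=p-1$. In the generic case $k\neq p-1$, $G_k$ has $p$-integral $q$-expansion, and its reduction $\overline{G}_k\in\M_k(\f_p)$ carries the eigensystem $\Phi(E_k)$ with constant term $-b_k/(2k)\pmod p$. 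The $q$-expansion principle, valid in the range $k\leq p+1$, identifies $\M^0_k(\f_p)$ with the kernel of the constant term map $\M_k(\f_p)\fd\f_p$. Therefore $\Phi(E_k)\in\mathcal{E}^{\rm Eis}(p,k)$ if and only if the constant term of $\overline{G}_k$ vanishes modulo $p$, equivalently if and only if $p$ divides the numerator of $b_k$. Conversely, any cuspidal mod $p$ eigenform in this eigenspace differs from $\overline{G}_k$ by a constant, which the $q$-expansion principle forces to vanish. The exceptional case $k=p-1$ is immediate: $b_{p-1}$ has $p$ in its denominator (so $p\nmid\text{num}(b_{p-1})$), and $E_{p-1}\equiv 1\pmod{p}$ is the Hasse invariant, which cannot lie in the cuspidal subspace.

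The main obstacle is the identification $\M^0_k(\f_p)=\ker(\M_k(\f_p)\fd\f_p)$ in the range $k\leq p+1$, which rests on the $q$-expansion principle and is standard (\cite{Se6}, \cite{Ka}). Once this is granted the remainder is the direct arithmetic observation that vanishing of the constant term of $\overline{G}_k$ is equivalent to $p\mid\text{num}(b_k)$.
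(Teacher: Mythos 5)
Your strategy is genuinely different from the paper's, which proves this proposition simply by invoking Serre's filtration argument from \cite{Se6}, \S 3.2 and noting that it extends to $k\leq p+1$ because $\M^0_p(\f_p)=0$. Your step (i) is essentially sound: a semisimple reducible $\rho_\Phi$ unramified outside $p$ with image of order prime to $p$ is a sum of powers of $\chi_p$, and the determinant together with the local structure at $p$ pins it down to $1\oplus\chi_p^{k-1}$ (though note that the paper recalls the Deligne--Fontaine description only for \emph{irreducible} $\rho_\Phi$, so you should at least remark that global reducibility forces the ordinary case before applying it). Likewise the identification $\M^0_k(\f_p)=\ker\bigl(a_0:\M_k(\f_p)\to\f_p\bigr)$ is correct for even $k\geq 4$, since the constant-term map $\M_k(\z)\to\z$ is surjective (consider $E_4^aE_6^b$) with kernel $\M^0_k(\z)$, so reduction mod $p$ preserves the exact sequence. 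Consequently your ``if'' direction is complete: when $p$ divides the numerator of $b_k$ and $k\neq p-1$, the reduction $\overline{G}_k$ is a nonzero cuspidal eigenform realizing $\Phi(E_k)$.

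The genuine gap is in the ``only if'' direction, at the sentence ``any cuspidal mod $p$ eigenform in this eigenspace differs from $\overline{G}_k$ by a constant.'' That claim is the entire nontrivial content of the proposition, and it does not follow from the $q$-expansion principle. If $f\in\M^0_k(\fbar)$ is a normalized eigenform with $T_\ell f=(1+\ell^{k-1})f$ for all $\ell\neq p$, then only the coefficients $a_n(f)$ with $p\nmid n$ are forced to agree with those of $\overline{G}_k$; the coefficients $a_{pn}(f)$ and the constant term are not determined by the prime-to-$p$ eigensystem, so a priori $f-\overline{G}_k$ is a nonzero form annihilated by $\theta$, not a constant. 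Excluding that possibility is exactly where Serre's filtration argument is needed: a nonzero form in $\ker\theta$ has filtration divisible by $p$, and one checks that no such filtration is compatible with an even weight $k\leq p+1$ (this is also where the fact $\M^0_p(\f_p)=0$, cited in the paper's proof, intervenes). The same omission affects your treatment of $k=p-1$: observing that $E_{p-1}\equiv 1$ is the Hasse invariant and hence not cuspidal rules out only that one realization, whereas the proposition requires that \emph{no} cusp form of weight $p-1$ carries the eigensystem $(1+\ell^{p-2})$; this boundary case again needs the filtration analysis. So the skeleton of your argument is right, but the step you pass over in one sentence is the theorem.
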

\begin{proof} A possible proof can be carried out using a filtration
argument. The details can be found in (\cite{Se6}, \S $3.2$ i)), where
a proof in the case $k< p-1$ is given. The proof there extends to the
cases $k\leq p+1$, mainly thank to the fact that $\M_p^0(\f_p)=0$.
\end{proof}

\subsection{Computation of $|\mathcal{E}(p,k)|$}

Let $k$ be a weight $\leq p+1$, and $n_k$ the integer $\dim_{\fbar}(\M^0_k(\fbar))=\dim_\ci(\M^0_k)$.
Instead of computing directly $|\mathcal{E}(p,k)|$, we find convenient to compute the difference
$n_k-|\mathcal{E}(p,k)|$ between the number of characteristic zero eigensystems arising from $\M^0_k$ and
that of mod $p$ eigensystems arising from the same space. Such integer can be consider as a measure
of the occurrence of mod $p$ congruences between eigenforms in $\M^0_k$. 
The method used is described in the following application of proposition \ref{corp}.
\begin{prop}\label{crit1} Let $r$ be an integer $>0$, $T_r\in\mathbf{T}^0_k$ the $r$--th Hecke operator, and $h_r(x)\in\z[x]$ its characteristic
polynomial as an endomorphism of $\M_k^0(\ci)$. Assume that the discriminant $\delta_r$ of $h_r(x)$ is nonzero. Let $f_p^{(r)}$ be the number of
$\fbar$--valued points of the spectrum of the ring $\z[T_r]=\z[x]/(h_r(x))$, then
\begin{equation}\label{inequ1}|\mathcal{E}(p,k)|\geq f_p^{(r)}\geq n_k-\nu_p(\delta_r).
\end{equation}
Moreover if $f_p^{(r)}=n_k-\nu_p(\delta_r)$, then
\begin{equation}\label{eqcrit1}
|\mathcal{E}(p,k)|=f_p^{(r)}=n_k-\nu_p(\delta_r).
\end{equation}
In this case $p$ does not divide the index of $\z[T_r]$ in its integral closure inside $\z[T_r]\otimes\qu=\mathbf{T}^0_k\otimes\qu$.
In particular, $p$ does not divide $[\mathbf{T}^0_k:\z[T_r]]$, we have $\nu_p(\delta_{\mathbf{T}^0_k})=\nu_p(\delta_r)$,
and the inclusion $\z[T_r]\subset\mathbf{T}^0_k$ induces an isomorphism
$$\f_p[x]/(\bar h_r(x))\simeq\mathbf{T}^0_k/p\mathbf{T}^0_k,$$
where $\bar h_r(x)$ denotes the reduction mod $p$ of $h_r(x)$.
\end{prop}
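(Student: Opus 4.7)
The plan is to reduce everything to the general commutative algebra fact of Proposition \ref{corp}, applied in turn to the rings $\z[T_r]$ and $\mathbf{T}^0_k$, both of which are finite $S_\qu$-rings of rank $n_k$ over $\z$.

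First, since $\delta_r\neq 0$, the polynomial $h_r(x)$ is separable over $\qu$, so $T_r$ has $n_k$ distinct eigenvalues on $\M^0_k(\ci)$. This forces $\dim_\qu (\z[T_r]\otimes\qu)=n_k=\dim_\qu(\mathbf{T}^0_k\otimes\qu)$, so the inclusion $\z[T_r]\subseteq\mathbf{T}^0_k$ becomes an equality after tensoring with $\qu$. Both rings therefore sit as orders inside the \'etale $\qu$-algebra $K=\prod_i K_i$ of Theorem \ref{Heckering}, and $\mathbf{T}^0_k$ is a finite $\z[T_r]$-module. By Proposition \ref{kleq2p-1}, valid since $k\leq p+1\leq 2p-1$, the set $\mathcal{E}(p,k)$ is in natural bijection with the $\fbar$-valued points of $\spe(\mathbf{T}^0_k)$.

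Next I would establish the two inequalities in (\ref{inequ1}). Since $\mathbf{T}^0_k$ is integral over $\z[T_r]$, the morphism $\spe(\mathbf{T}^0_k)\to\spe(\z[T_r])$ is surjective by going-up, and the standard extension lemma for ring homomorphisms into algebraically closed fields makes surjectivity hold at the level of $\fbar$-valued points, giving $|\mathcal{E}(p,k)|\geq f_p^{(r)}$. Applying Proposition \ref{corp} to the $S_\qu$-ring $\z[T_r]\simeq\z[x]/(h_r(x))$, whose discriminant coincides with $\delta_r$, produces the second inequality $f_p^{(r)}\geq n_k-\nu_p(\delta_r)$.

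For the equality case, suppose $f_p^{(r)}=n_k-\nu_p(\delta_r)$. The equality clause of Proposition \ref{corp} should state that this bound is attained precisely when the ring in question is $p$-maximal, i.e., when $\z[T_r]\otimes\z_{(p)}$ equals its integral closure in $K$. Because $\mathbf{T}^0_k$ is sandwiched between $\z[T_r]$ and that integral closure, localizing at $p$ collapses the entire tower: $\z[T_r]\otimes\z_{(p)}=\mathbf{T}^0_k\otimes\z_{(p)}$, and the same holds for the integral closure. From this one reads off that $p$ divides neither $[\mathbf{T}^0_k:\z[T_r]]$ nor the index of $\mathbf{T}^0_k$ in its integral closure; the multiplicativity of the discriminant, $\delta_{\z[T_r]}=[\mathbf{T}^0_k:\z[T_r]]^2\delta_{\mathbf{T}^0_k}$, then yields $\nu_p(\delta_{\mathbf{T}^0_k})=\nu_p(\delta_r)$. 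Reducing the equality $\z[T_r]\otimes\z_{(p)}=\mathbf{T}^0_k\otimes\z_{(p)}$ modulo $p$ gives the isomorphism $\f_p[x]/(\bar h_r(x))\simeq\mathbf{T}^0_k/p\mathbf{T}^0_k$, and Proposition \ref{corp} applied to $\mathbf{T}^0_k$, which is now also $p$-maximal, gives $|\mathcal{E}(p,k)|=n_k-\nu_p(\delta_{\mathbf{T}^0_k})=f_p^{(r)}$, closing the argument. The main work is packaged into Proposition \ref{corp}: while the lower bound $|\spe(R)(\fbar)|\geq\rank_\z R-\nu_p(\delta_R)$ is a direct discriminant count, identifying its equality case with the $p$-maximality of $R$ is what allows the tower $\z[T_r]\subseteq\mathbf{T}^0_k\subseteq C$ to collapse after $p$-localization and drives all the remaining conclusions.
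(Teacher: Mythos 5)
Your proof is correct and follows the paper's intended route: Proposition \ref{crit1} is exactly Proposition \ref{corp} specialized to $R=\mathbf{T}^0_k$ and $T=T_r$, combined with the identification of $\mathcal{E}(p,k)$ with the $\fbar$--valued points of $\spe(\mathbf{T}^0_k)$ from Proposition \ref{kleq2p-1} (valid since $k\leq p+1\leq 2p-1$), and your sandwich argument $\z[T_r]\subseteq\mathbf{T}^0_k\subseteq\widetilde{\z[T_r]}$ collapsing after localization at $p$ is precisely the content of the paper's own proof of Proposition \ref{corp}. One minor inaccuracy: equality in Theorem \ref{prop} is equivalent to $p$--maximality \emph{together with} tameness of $p$ in the components of $R\otimes\qu$, not to $p$--maximality alone, but you only use the implication (equality $\Rightarrow$ $p$--maximality), which is correct, and the final equality $|\mathcal{E}(p,k)|=f_p^{(r)}$ also follows directly from the isomorphism $\f_p[x]/(\bar h_r(x))\simeq\mathbf{T}^0_k/p\mathbf{T}^0_k$ without reinvoking the discriminant bound.
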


Notice that the integer $f^{(r)}_p$ is simply the degree of the largest square--free factor of the reduction mod $p$ of $h_r(x)$.

As stated in the proposition, the subring $\z[T_r]\subset\enn_\ci(\M^0_k)$ is isomorphic to $\z[x]/(h_r(x))$ thank to the fact that
$T_r$ is a semisimple endomorphism of $\M^0_k$, and to the assumption $\delta_r\neq 0$.

\begin{defi} If the characteristic polynomial $h_r(x)$ of $T_r$ acting on $\M^0_k$ has nonzero discriminant and
satisfies the numerical condition
$$f_p^{(r)}=n_k-\nu_p(\delta_r)$$
appearing in second part of the proposition, then we will say that the Hecke operator $T_r$, acting on $\M^0_k$, is $p$--good.
\end{defi}

Of course the proposition can only be useful if one disposes of an Hecke operator $T_r$ so that $\delta_r\neq 0$,
which amounts to the requirement that the eigenvalues of $T_r$ acting on $\M^0_k$ be pairwise distinct. This condition is
perhaps not too restrictive since in all known cases $h_r(x)$ is even {\it irreducible}, for $r>1$.

Consider all pairs $(p,k)$, where $p$ is a prime number $\leq 1999$, and $k$ is an even integer $\leq p+1$ so that $\M^0_k$ is nonzero.
For each such pair, we had looked for the least integer $r$, with $1<r<20$, so that $T_r$ acting on $\M^0_k$ is a $p$--good Hecke operator.
In the table below we describe for how many pairs $(p,k)$ a given $r$ with $1<r<20$ had such property.

\vspace{5mm}

\begin{center}

\begin{tabular}{c|cccccccccc}
r	&2		&3		&5	&6	&7	&10	&11	&12	&14	&17\\
\hline
	&136611	&205	&28	&1	&10	&2	&4	&2	&1	&1\\
\end{tabular}

\end{center}

\vspace{5mm}

Out of the $136873$ many pairs $(p,k)$ considered, only in $8$ cases there is no integer $r<20$ (and there seem to be no integer at all)
so that $T_r$ acting on $\M^0_k$ is $p$--good. It is the ease of finding $p$--good Hecke operators that makes Proposition \ref{crit1}
efficient for computing the difference $n_k-|\mathcal{E}(p,k)|$. We found that $n_k-|\mathcal{E}(p,k)|$ is always $<3$, and the number of
times the values $0, 1$ and $2$ are attained are described by the next table, which gives an idea of how rare congruences are in this setting.

\vspace{5mm}

\begin{center}

\begin{tabular}{c|ccc}
$t$						&0		&1		&2\\
\hline
$|\left(n_k-|\mathcal{E}(p ,k)|\right)^{-1}(t)|$	&135703	&161	&1\\
\end{tabular}

\end{center}

\vspace{5mm}

The $8$ pairs $(p,k)$ for which we are unable to find a $p$--good Hecke operator acting on $\M^0_k$ are: $(491, 246)$,  $(563, 282)$, $(751, 376)$, $(1399, 700)$,
$(1423, 712)$, $(1823, 912)$, $(1879, 940)$, $(1931, 916)$. All these pairs are of the form $(p,(p+1)/2)$, and the space $\M^0_{(p+1)/2}$ gives rise to a set of mod $p$
systems of eigenvalues whose associated representations are of dihedral type. We have a good understanding of dihedral systems, and in subsection
\ref{dihecase} we explain how we computed $|\mathcal{E}(p,k)|$ in these cases. As it turns out, in all the $8$ cases $|\mathcal{E}(p,k)|=n_k$.



\begin{rem} Let $\tilde{\mathbf{T}}^0_k$ be the integral closure of the Hecke ring $\mathbf{T}^0_k$ in $\mathbf{T}^0_k\otimes\qu$.
For all the pairs $(p,k)$ considered, $p$ does not divide the index of $\mathbf{T}^0_k$ in $\tilde{\mathbf{T}}^0_k$.
This follows from proposition \ref{crit1} whenever there exists a $p$--good Hecke operator $T_r$ acting on $\M^0_k$, and it follows
from the equality $|\mathcal{E}(p,k)|=n_k$ in the remaining $8$ cases. The conclusion is that, if $k\leq p+1$ and $p\leq 1999$, we have
$\Hom_{\rm rin gs}(\mathbf{T}^0_k,\fbar)=\Hom_{\rm rings}(\tilde{\mathbf{T}}^0_k,\fbar)$, and there is no example of a mod $p$ congruence
between two distinct eigensystems arising from $\M^0_k$ caused by the fact that the order $\mathbf{T}^0_k$ is not maximal at $p$. In other words, all
the mod $p$ congruences between distinct characteristic zero Hecke eigensystems arising from $\M^0_k$ that we had found can be explained in terms of
ramification properties above $p$ of the components of $\mathbf{T}^0_k\otimes\qu$.
\end{rem}

\subsection{An upper bound for $|\mathcal{E}^{p-{\rm tame}}(p,k)|$}\label{comptam}
The set $\mathcal{E}^{p-{\rm tame}}(p,k)$ is the disjoint union of
$\mathcal{E}^{p-{\rm split}}(p,k)$ and $\mathcal{E}^{p-{\rm irr}}(p,k)$, and
we will bound these two sets separately. In order to bound the size of
$\mathcal{E}^{p-{\rm split}}(p,k)$ (resp. $\mathcal{E}^{p-{\rm irr}}(p,k)$) we need to estimate how often there exists a
system of eigenvalues $\Phi$ arising from $\M_k^0(\fbar)$ so that the eigensystem
$\Phi^{(1-k)}$ (resp. $\Phi^{(2-k)}$) arises from $\M_{p+1-k}^0(\fbar)$ (resp. $\M_{p+3-k}^0(\fbar)$) (cf. Theorem \ref{tamramtwist}).

Let $h(x)$ and $j(x)$ be monic polynomials in $\z[x]$ and let $p$ be any prime
number. Consider the greatest common divisor $d_p(x)\in\f_p[x]$ of the reduction
mod $p$ of $h(x)$ and $j(x)$.

\begin{defi} The {\sl linking number at $p$} of $h(x)$ and $j(x)$ is the degree
of $d_p(x)$, it is denoteb by $e_p(h,j)$.
\end{defi}

The integer $e_p(h,j)$ is a measure of the congruences mod $p$ between the roots of $h(x)$
and $j(x)$. It is zero if and only if the reduction mod $p$ of $h(x)$ and $j(x)$ have no common roots
$\fbar$.

\begin{prop}\label{tamram} Let $\ell\neq p$ be any prime, $h(x)\in\z[x]$ the characteristic polynomial of
$T_\ell$ acting on $\M^0_k$, and $j(x)\in\z[x]$ the characteristic polynomial of $\ell^{k-1}T_\ell$ acting on $\M^0_{p+1-k}$.
Then
$$|\mathcal{E}^{p-{\rm split}}(p,k)|\leq e_p(h,j).$$
\end{prop}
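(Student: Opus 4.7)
The plan is to partition $\mathcal{E}^{p-\rm split}(p,k)$ according to the value of $a_\ell = \lambda_\Phi(T_\ell) \in \fbar$ and bound each fiber by the multiplicity of that value in $d_p = \gcd(\bar h, \bar j) \in \f_p[x]$; summing over roots then gives $\deg d_p = e_p(h,j)$.

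First I would show that $a_\ell$ is always a root of $d_p$. By Cayley--Hamilton, $h(T_\ell) = 0$ in $\enn_\ci(\M^0_k)$, hence in the subring $\mathbf{T}^0_k$, so applying the ring homomorphism $\lambda_\Phi$ and reducing mod $p$ yields $\bar h(a_\ell) = 0$. Since $\Phi \in \mathcal{E}^{p-\rm split}(p,k)$, Theorem \ref{tamramtwist}(ii) provides the twist $\Phi^{(1-k)} \in \mathcal{E}(p, p+1-k)$, whose $T_\ell$-eigenvalue is $\ell^{1-k} a_\ell$. Applying Cayley--Hamilton to $\ell^{k-1} T_\ell$ acting on $\M^0_{p+1-k}$, whose characteristic polynomial is $j$, analogously produces $\bar j(a_\ell) = 0$.

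Next I would decompose $\mathcal{E}^{p-\rm split}(p,k) = \bigsqcup_{\alpha} S_\alpha$, where $\alpha$ ranges over roots of $d_p$ in $\fbar$ and $S_\alpha = \{\Phi : a_\ell = \alpha\}$, and establish $|S_\alpha| \leq \min\{m(\alpha, \bar h),\, m(\alpha, \bar j)\}$, which is the multiplicity of $\alpha$ in $d_p$. For $|S_\alpha| \leq m(\alpha, \bar h)$: since $k \leq p+1 \leq 2p-1$, Proposition \ref{kleq2p-1} identifies eigensystems with $\fbar$-points of $\spe(\mathbf{T}^0_k)$, i.e.\ with maximal ideals $\mathfrak{m}$ of $\mathbf{T}^0_k \otimes \fbar$. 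Being a commutative finite-dimensional $\fbar$-algebra, $\mathbf{T}^0_k \otimes \fbar$ is a product of local rings, so $\M^0_k(\fbar) = \bigoplus_\mathfrak{m} M_\mathfrak{m}$; each $M_\mathfrak{m}$ is non-zero since the corresponding character is realized by an eigenform. The generalized $\alpha$-eigenspace of $T_\ell$ is then $V_\alpha = \bigoplus_{\mathfrak{m} : \lambda_\mathfrak{m}(T_\ell) = \alpha} M_\mathfrak{m}$, whose dimension is $m(\alpha, \bar h)$, forcing $|S_\alpha| \leq m(\alpha, \bar h)$. The bound $|S_\alpha| \leq m(\alpha, \bar j)$ follows from running the same argument on $\M^0_{p+1-k}(\fbar)$ via the injective twist $\Phi \mapsto \Phi^{(1-k)}$, which sends $S_\alpha$ into the fiber over $\ell^{1-k}\alpha$ in $\mathcal{E}(p, p+1-k)$.

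Summing over $\alpha$ concludes the proof: $|\mathcal{E}^{p-\rm split}(p,k)| = \sum_\alpha |S_\alpha| \leq \sum_\alpha m(\alpha, d_p) = \deg d_p = e_p(h,j)$. The most delicate step is the isotypic decomposition in the preceding paragraph, especially the non-vanishing of each $M_\mathfrak{m}$ attached to a genuine eigensystem; this hinges on Proposition \ref{kleq2p-1} and the commutative structure of $\mathbf{T}^0_k \otimes \fbar$, both of which are available in the weight range $k \leq p+1$.
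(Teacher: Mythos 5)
Your proof is correct and follows the same route as the paper: Gross's tameness criterion (condition ii of Theorem \ref{tamramtwist}) shows that $a_\ell$ is a common root of $\bar h$ and $\bar j$, and one then counts. The paper's proof stops at ``setting $q=\ell$'' and leaves the multiplicity bookkeeping implicit; your decomposition $\M^0_k(\fbar)=\bigoplus_{\m}M_{\m}$ with $|S_\alpha|\leq\min\{m(\alpha,\bar h),m(\alpha,\bar j)\}=m(\alpha,d_p)$ is exactly the justification needed for that step (without it one would only bound $|\mathcal{E}^{p-{\rm split}}(p,k)|$ by the number of \emph{distinct} common roots, which is weaker than what is used later, e.g.\ for the dihedral eigensystems all sharing $a_\ell=0$).
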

\begin{proof} Let $\Phi$ be a system of eigenvalues arising from $\M_k^0(\fbar)$ so that
$\rho_\Phi$ is irreducible. By the tameness criterion established by Gross,
the restriction of $\rho_\Phi$ to a decomposition group at $p$ is decomposable if and only
if there exists a system of mod $p$ eigenvalues $(b_\ell)$ arising from
$\M_{p+1-k}^0(\fbar)$ so that
$$a_q=q^{k-1}b_q,$$
for all primes $q\neq p$. In particular, setting $q=\ell$, we see that
$$|\mathcal{E}^{p-{\rm split}}(p,k)|\leq e_p(h,j),$$
where $e_p(h,j)$ is the linking number at $p$ of the polynomials $h(x)$ and $j(x)$.
The proposition follows.
\end{proof}

Similarly we have

\begin{prop}\label{tamram2} Let $\ell\neq p$ be any prime, $h(x)\in\z[x]$ the characteristic polynomial of
$T_\ell$ acting on $\M^0_k$, and $j(x)\in\z[x]$ the characteristic polynomial of $\ell^{k-2}T_\ell$ acting on $\M^0_{p+3-k}$.
Then
$$|\mathcal{E}^{p-{\rm irr}}(p,k)|\leq e_p(h,j).$$
\end{prop}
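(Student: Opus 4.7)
My plan is to argue in complete parallel to the proof of Proposition \ref{tamram}, with part (i) of Theorem \ref{tamramtwist} --- the irreducibility criterion for $(\rho_\Phi)_p$ highlighted in the remark immediately following that theorem --- playing the role that part (ii) plays in the split case. I fix an arbitrary $\Phi = (a_q)_{q\neq p} \in \mathcal{E}^{p-{\rm irr}}(p,k)$, so that $(\rho_\Phi)_p$ is tamely ramified and irreducible. The criterion then asserts that the twist $\Phi^{(2-k)} = (q^{2-k} a_q)_{q\neq p}$ arises from $\M^0_{p+3-k}(\fbar)$; unpacked, this means there is a nonzero $g \in \M^0_{p+3-k}(\fbar)$ satisfying $T_q g = q^{2-k} a_q \cdot g$ for every prime $q \neq p$.

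Next I specialize to $q = \ell$. Rearranging gives $\ell^{k-2} T_\ell \cdot g = a_\ell \cdot g$, so $a_\ell$ is an eigenvalue of the twisted operator $\ell^{k-2} T_\ell$ acting on $\M^0_{p+3-k}(\fbar)$, and hence is a root of $\bar j(x) \in \f_p[x]$. On the other hand, since $\Phi$ itself arises from $\M^0_k(\fbar)$, the element $a_\ell$ is already a root of $\bar h(x)$. Therefore $a_\ell$ is a common root of the two reductions, that is, a root of $d_p(x)$. Because $d_p(x)$ has at most $\deg d_p(x) = e_p(h,j)$ roots in $\fbar$, the bound $|\mathcal{E}^{p-{\rm irr}}(p,k)| \leq e_p(h,j)$ follows via the assignment $\Phi \mapsto a_\ell$.

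I do not anticipate a real obstacle, since the argument is a mechanical transposition of the preceding proof, with the twist exponent $2-k$ (and hence the factor $\ell^{k-2}$, weight $p+3-k$) replacing $1-k$ (factor $\ell^{k-1}$, weight $p+1-k$). The only subtlety worth flagging is that turning a bound on the number of common roots into a bound on $|\mathcal{E}^{p-{\rm irr}}(p,k)|$ requires the assignment $\Phi \mapsto a_\ell$ to be injective on this set; this is the same unstated point as in the companion Proposition \ref{tamram}, and it is handled by the same implicit convention --- in practice, by the simplicity of the $T_\ell$-spectrum on the relevant modular form spaces whenever a $p$-good Hecke operator is available.
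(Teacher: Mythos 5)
Your proof is correct and is exactly the argument the paper intends: Proposition \ref{tamram2} is stated with only the word ``Similarly,'' and the intended proof is the verbatim transposition of the proof of Proposition \ref{tamram}, using criterion i) of Theorem \ref{tamramtwist} (equivalent to $(\rho_\Phi)_p$ irreducible, as noted in the remark following that theorem) in place of the decomposability criterion, with the twist $\Phi^{(2-k)}$, the factor $\ell^{k-2}$, and the weight $p+3-k$. The injectivity worry you flag at the end is not actually needed: $e_p(h,j)$ is the degree of $d_p(x)$, which counts roots with multiplicity, and if several distinct eigensystems in $\mathcal{E}^{p-{\rm irr}}(p,k)$ share the same value $a_\ell$, then that value is a root of $\bar h$ and (since twisting is a bijection onto distinct eigensystems of $\M^0_{p+3-k}(\fbar)$) of $\bar j$ with multiplicity at least equal to their number, so the bound holds regardless.
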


For any given prime $\ell\neq p$, the two propositions provide an upper bound for $|\mathcal{E}^{p-{\rm split}}(p,k)|$ and
$|\mathcal{E}^{p-{\rm irr}}(p,k)|$. We computed these estimates for $\ell=2$ and $3$ and kept the smallest values so obtained.
In the special case where $k=(p+1)/2$ (resp. $k=(p+3)/2$), in order to bound $|\mathcal{E}^{p-{\rm split}}(p,k)|$ (resp.
$|\mathcal{E}^{p-{\rm irr}}(p,k)|$) we used the smallest prime $\ell\neq p$ that is not a quadratic residue mod $p$, for otherwise
the characteristic polynomials of $T_\ell$ and $\ell^{k-1}T_\ell$ (resp. $\ell^{k-2}T_\ell$) acting on $\M^0_k$
would have the same mod $p$ reduction and the resulting upper bound would be $\dim_\ci(\M^0_k)$, the worse possible.

When $p\equiv 3$ mod $4$ and $k=(p+1)/2$, if $h$ is the class number of $\qu(\sqrt {-p})$, it can be shown that the set
$\mathcal{E}^{p-{\rm split}}(p,k)$ contains precisely $(h-1)/2$ eigensystems $\Phi$ so that $\Phi=\Phi^{(p-1)/2}$. These are the
eigensystems whose associated representations are of dihedral type (cf. subsection \ref{dihecase}). Let $\mathcal{E}^{p-{\rm split}, {\rm nd}}(p,k)$ be the subset
of $\mathcal{E}^{p-{\rm split}}(p,k)$ of all the eigensystems $\Phi$ so that $\Phi\neq\Phi^{(p-1)/2}$. Out of the $136873$ pairs $(p,k)$
considered, we found that $\mathcal{E}^{p-{\rm split}, {\rm nd}}(p,k)$ is empty in $136706$ cases, furthermore its cardinality never exceeds $3$.

Of the $299$ many primes $p$ with $11\leq p\leq 1999$, we found that for $224$ many of them there are no mod $p$ Galois representations
that are irreducible, odd, $2$--dimensional, unramified outside $p$, non--dihedral and so that the local representation at $p$ is decomposable.

For what concerns the other class of tamely ramified representations, we report that $|\mathcal{E}^{p-\rm{irr}}(p,k)|$ is zero
in $136696$ cases, and it is always $<5$. In the range considered, for $218$ many primes $p$ there are no mod $p$ Galois
representations of the type considered so that the local representation at $p$ is irreducible.

If $p$ is a prime for which we know that $\mathcal{E}^{p-{\rm split}, {\rm nd}}(p,k)$ and $\mathcal{E}^{p-\rm{irr}}(p,k)$ are empty for all
$k\leq p+1$, then our method lead to the exact value of $R(p)$. This happens for $164$ many primes, they appear in the table with the
symbol $\sp*$ typed next to the corresponding value $L(p)$ in the second column.

\subsection{The dihedral case}\label{dihecase} Let $\Phi$ be a system of mod $p$ eigenvalues
arising from $\M^0_k(\fbar)$ so that $\rho_\Phi$ is of {\sl dihedral type}, meaning
that the projective image $G$ of $\rho_\Phi$ in $\pgl_2(\fbar)$ is isomorphic
to a dihedral group $C_n\rtimes\z/2\z$, where $C_n$ is a cyclic group
of order $n\geq 2$ and the nontrivial element of $\z/2\z$ acts on $C_n$ by inversion.
Since $\rho_\Phi$ is, by definition, semisimple, it follows that any representation
$\rho_\Phi$ of dihedral type acts on $\fbar^2$ irreducibly.

Representations of dihedral type fit in the class of ``small--image'' representations
and are among them the easiest to understand and classify. It can be shown that

\begin{prop} Let $\Phi$ be an eigensystem arising from $\M^0_k(\fbar)$, with
$2\leq k\leq p+1$. The representation $\rho_\Phi$ is of dihedral type if and only if $\Phi=\Phi^{(p-1)/2}$. In this case we have

i) $\rho_\Phi$ is tamely ramified at $p$;

ii) $p\equiv 3$ mod $4$, $k=(p+1)/2$;

iii) the local representation ${(\rho_\Phi)}_p$ is described by the sum
of the trivial character and the quadratic character $\chi_p^{(p-1)/2}$,
where $\chi_p$ denotes the mod $p$ cyclotomic character of $G_p=G(\overline\qu_p/\qu_p)$;

iv) the image of $\rho_\Phi$ is isomorphic to $C_n\rtimes\z/2\z$, with $n$ odd;

v) $\rho_\Phi={\rm Ind}_K^Q(\Psi)$, where $K=\qu{(\sqrt{-p})}$, and $\Psi:G_K\rightarrow\fbar^*$ is a
continuous, everywhere unramified character.
\\Furthermore, there are precisely $(h-1)/2$ distinct isomorphism classes
of such $\rho_\Phi$, where $h$ is the class number of the imaginary
quadratic field $\qu(\sqrt{-p})$.
\end{prop}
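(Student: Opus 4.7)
The plan is to translate the condition $\Phi=\Phi^{(p-1)/2}$ into a self-twist statement for $\rho_\Phi$, extract properties (i)--(v) from the resulting induced-representation description, and count via class field theory combined with the level-one Serre Conjecture. By the formula $\rho_{\Phi^{(h)}}\simeq \chi_p^h\otimes\rho_\Phi$ recorded in section \ref{gener}, the condition $\Phi=\Phi^{(p-1)/2}$ is equivalent to $\rho_\Phi\simeq\rho_\Phi\otimes\chi_p^{(p-1)/2}$. Since $\chi_p^{(p-1)/2}$ is the unique non-trivial quadratic Dirichlet character of conductor $p$, it cuts out the quadratic field $F=\qu(\sqrt{p^*})$ with $p^*=(-1)^{(p-1)/2}p$. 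The standard Frobenius--Schur argument then says that an irreducible two-dimensional $\rho_\Phi$ satisfies $\rho_\Phi\simeq\rho_\Phi\otimes\psi$ for a non-trivial quadratic character $\psi$ exactly when $\rho_\Phi\simeq\mathrm{Ind}_F^\qu\Psi$ for some character $\Psi$ of $G_F$, and any such induced representation is of dihedral type. Conversely, if $\rho_\Phi$ is dihedral, its projective image $G\simeq C_n\rtimes\z/2\z$ admits (for $n$ odd) the unique quadratic quotient $G\to\z/2\z$ cutting out a quadratic extension of $\qu$ unramified outside $p$; the only candidates are $\qu(\sqrt{\pm p})$, and the oddness of $\rho_\Phi$ selects the imaginary one, forcing $p\equiv 3\pmod 4$ and $F=K=\qu(\sqrt{-p})$. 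This yields the main equivalence together with the arithmetic part of (ii).

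For the local picture at $p$, (i) is immediate because $\Psi$ has image in $\fbar^*$, which has no $p$-torsion, so $\Psi$ is trivial on wild inertia and $\rho_\Phi=\mathrm{Ind}_K^\qu\Psi$ is tamely ramified at $p$. For (v), $\Psi$ is unramified outside the unique prime $\P\mid p$ of $K$ directly from the analogous property of $\rho_\Phi$; to see $\Psi$ is also unramified at $\P$, I would combine Mackey, which gives $\rho_\Phi|_{I_p}=\mathrm{Ind}_{I_{K,\P}}^{I_p}(\Psi|_{I_{K,\P}})$, with the Deligne--Fontaine classification of $(\rho_\Phi)_p^{ss}$ in the range $2\leq k\leq p+1$. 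Unramifiedness of $\Psi$ at $\P$ then yields $\rho_\Phi|_{I_p}=1\oplus\chi_p^{(p-1)/2}|_{I_p}$, the second summand being the quadratic character cut out by the ramified extension $K_\P/\qu_p$. The determinant relation $\det\rho_\Phi=\chi_p^{k-1}$ forces $k-1\equiv(p-1)/2\pmod{p-1}$, and in the even-weight range $2\leq k\leq p+1$ the only solution is $k=(p+1)/2$, completing (ii) and (iii). For (iv), genus theory for $\qu(\sqrt{-p})$ with $p\equiv 3\pmod 4$ shows the $2$-rank of $\mathrm{Cl}_K$ is zero (the discriminant $-p$ has a single prime divisor), so $h$ is odd; since by (v) $\Psi$ factors through $\mathrm{Cl}_K$, its image has odd order and $n$ is odd.

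For the count, class field theory identifies everywhere-unramified $\fbar^*$-valued characters of $G_K$ with characters of $\mathrm{Cl}_K$, of which there are $h$. Two such characters $\Psi,\Psi'$ yield isomorphic induced representations iff $\Psi'\in\{\Psi,\Psi^\sigma\}$; since $\sigma$ acts on $\mathrm{Cl}_K$ by inversion (as $K$ is imaginary quadratic), this means $\Psi'=\Psi^{\pm 1}$. The induced representation is irreducible iff $\Psi^2\neq 1$; because $h$ is odd this excludes only the trivial character, leaving $h-1$ admissible characters pairing into $(h-1)/2$ isomorphism classes. Invoking the level-one Serre Conjecture matches each such Galois representation with a unique mod $p$ eigensystem arising from $\M^0_{(p+1)/2}(\fbar)$, delivering the count.

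The hardest step, I expect, will be establishing simultaneously that $\Psi$ is unramified at $\P$ and that $k=(p+1)/2$. These are intertwined: \emph{a priori} Mackey allows $\rho_\Phi|_{I_p}$ to have a shape depending on $\Psi|_{I_{K,\P}}$, and only the Deligne--Fontaine classification restricted to the weight range $k\leq p+1$ forces the clean decomposition $1\oplus\chi_p^{(p-1)/2}$ of (iii), thereby ruling out any ramified $\Psi$. This is the numerical coincidence from which both the weight identification and the unramifiedness of $\Psi$ are simultaneously extracted.
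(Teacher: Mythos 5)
The paper itself offers no proof of this proposition (it is prefaced by ``It can be shown that''), so your sketch can only be judged on its own merits. Its overall architecture --- self-twist $\Rightarrow$ induction from the quadratic field cut out by $\chi_p^{(p-1)/2}$, local analysis at $p$, then counting via class field theory --- is the right one, and the first paragraph, part (i), and the final count are fine. But there is a genuine gap exactly at the step you flag as hardest, and in the adjacent claim that ``oddness selects the imaginary one.'' For $p$ odd there is only \emph{one} quadratic field unramified outside $p$, namely $\qu(\sqrt{p^*})$ with $p^*=(-1)^{(p-1)/2}p$; when $p\equiv 1 \bmod 4$ this field is \emph{real}, and oddness alone does not forbid inducing from a real quadratic field (a character of mixed signature at the two real places yields an odd induction). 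Correspondingly, your appeal to Deligne--Fontaine is incomplete: that classification has a second branch in which $(\rho_\Phi)_p$ is irreducible, $\rho_\Phi|_{I_p}$ is a sum of level-two fundamental character powers, and the self-twist condition then yields $k-1\equiv(p+1)/2 \bmod (p+1)$, i.e.\ $k=(p+3)/2$ --- which is \emph{even} precisely when $p\equiv 1\bmod 4$ and is compatible with $\Psi$ being (tamely) ramified at $\P$. As written, your argument silently assumes the reducible-at-$p$ branch, so it neither proves $p\equiv 3\bmod 4$ nor the unramifiedness of $\Psi$ at $\P$; it only shows these hold \emph{if} that branch occurs.

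The clean way to close the gap is to prove (iv) \emph{first}, independently of (v): the abelianization of the projective image $C_n\rtimes\z/2\z$ is $\z/2\z$ if $n$ is odd and $(\z/2\z)^2$ if $n$ is even, and the latter would cut out a biquadratic extension of $\qu$ unramified outside $p$, contradicting the uniqueness of $\qu(\sqrt{p^*})$; hence $n$ is odd. An irreducible $(\rho_\Phi)_p$ that is simultaneously induced from the ramified extension $K_\P/\qu_p$ (forced by the global induction, since $p$ ramifies in $\qu(\sqrt{p^*})$) and from the unramified quadratic extension (forced by Deligne--Fontaine when $a_p=0$) would have local projective image $(\z/2\z)^2$, impossible inside $C_n\rtimes\z/2\z$ with $n$ odd. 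So $(\rho_\Phi)_p$ is reducible, only the level-one branch survives, and your determinant computation then legitimately gives $k=(p+1)/2$, whence $p\equiv 3\bmod 4$ (as $k$ is even), $\Psi$ unramified at $\P$, and (iii). (Equivalently one can exclude the real field $\qu(\sqrt p)$, $p\equiv1\bmod4$, by noting that $\Psi/\Psi^\sigma$ is unramified at all finite places yet totally odd, which is impossible since the narrow and ordinary class groups of $\qu(\sqrt p)$ coincide, the fundamental unit having norm $-1$; but this is extra input your sketch does not supply.) Two smaller points: your genus-theory derivation of (iv) from (v) gets the logical order backwards relative to what is needed; and the stated biconditional requires $\rho_\Phi$ irreducible, since an Eisenstein eigensystem $(1+\ell^{k-1})$ with $k=(p+1)/2$ also satisfies $\Phi=\Phi^{(p-1)/2}$ without being of dihedral type --- a caveat worth recording, as your Frobenius--Schur step explicitly assumes irreducibility.
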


The last statement of the proposition is essentially a modularity result for dihedral representations.
This case of Serre's Conjecture was known much earlier thank to the work of Hecke.

\begin{rem}
For a prime $p\equiv 3$ mod $4$, a consequence of the proposition is that if $\ell$ is a prime that is not a quadratic residue
mod $p$, then the characteristic polynomial $\bar h_\ell(x)\in\f_p[x]$ of the Hecke operator $T_\ell$ acting on $\M^0_k(\fbar)$ is divisible by
$x^{(h-1)/2}$. Using this simple fact we succeed in computing the value of $|\mathcal{E}(p,k)|$ in the few cases where we were not able to apply
the criterion of Proposition \ref{crit1}.
\end{rem}

\section{Discriminants of $S_\qu$--rings}\label{discr}

In the next two sections we describe the theoretical basis of our computations by working in an axiomatic setting.
In this section we introduce a special class of rings generalizing orders of number fields and recall definition and basic properties
of their discriminant.

\begin{defi} A ring $R$, commutative with identity, is called a {\sl finite $S_\qu$--ring}
if the following conditions are satisfied:

i) $R$ is finite and free as $\z$--module;

ii) $R\otimes\qu$ is isomorphic to a product of fields.
\\The {\sl rank} of $R$ is its rank as $\z$--module.
\end{defi}

Condition ii) can be replaced by

ii)' $R$ is reduced;
\\without affecting the notion just introduced. Our motivation for considering finite
$S_\qu$--rings is that the Hecke ring $\mathbf{T}^0_k$ is of this type (cf. Theorem \ref{Heckering}).

It is clear at once that if $R$ is a finite $S_\qu$--ring, and $R'\subset R$ is a subring
of finite index, then $R'$ is itself a finite $S_\qu$--ring of the same rank as $R$. Furthermore,
the product of finitely many finite $S_\qu$--rings is also a finite $S_\qu$--ring. If $h(x)\in\z[x]$
is a monic polynomial, then $R_h=\z[x]/(h(x))$ is a finite $S_\qu$--ring if and only if it
is reduced, i.e. if and only if $h(x)$ is square free.

Let $R$ be any finite $S_\qu$--ring of rank $n$, and regard it as a subring of $R\otimes\qu$ via
the injection $a\rightarrow a\otimes 1$.
The Artin ring $R\otimes\qu$ decomposes as the
product finitely many local Artin rings
$$R\otimes\qu\simeq\prod_{1\leq i\leq r}K_i,$$
and the factors of the decomposition are in correspondence with its prime
ideals. By assumption, every $K_i$ is a field, necessarily finite over $\qu$;
we have
$$n=\sum_{1\leq i\leq r}[K_i:\qu].$$
%

The ring extension $\z\subset R$ is finite and therefore integral. It follows
that the integral closure $\tilde R$ of $R$ in $R\otimes\qu$ coincides
with that of $\z$. Therefore, if $R_i$ denotes the ring
of integers of $K_i$, we see that
$$\tilde R=\prod_{1\leq i\leq r}R_i.$$
Moreover $R$ has finite index in $\tilde R$, since the ranks of both rings
equal to $\dim_\qu(R\otimes\qu)$. We have shown 

\begin{prop} Any finite $S_\qu$--ring $R$ is isomorphic to a finite index subring
of the product of the rings of integers $R_i$'s of finitely many number fields
$K_i$'s.
\end{prop}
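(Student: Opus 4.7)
The plan is essentially to make rigorous the sketch already laid out in the paragraphs immediately preceding the statement. First I would record the injective ring map $R \hookrightarrow R \otimes \qu$, $a \mapsto a \otimes 1$, which is injective because $R$ is free, hence torsion-free, over $\z$. By condition ii) of the definition, $R \otimes \qu \simeq \prod_{i=1}^r K_i$ for some fields $K_i$; since $R \otimes \qu$ is a finite-dimensional $\qu$-algebra, each factor $K_i$ is a quotient of it, so it is finite over $\qu$, i.e.\ a number field, and the dimensions satisfy $n = \sum_i [K_i:\qu]$ where $n$ is the rank of $R$.

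Next I would identify the integral closure $\tilde R$ of $R$ inside $R \otimes \qu$. Because $\z \subset R$ is a finite, hence integral, extension, transitivity of integrality gives that the integral closure of $R$ in $R \otimes \qu$ equals the integral closure of $\z$ there. Integral closure commutes with finite products of fields: an element $(x_1,\ldots,x_r) \in \prod K_i$ is integral over $\z$ if and only if each $x_i$ is, since the product of monic relations for the components yields a monic relation for the tuple, while projecting a monic relation for the tuple to any factor yields one for the corresponding $x_i$. Hence $\tilde R = \prod_{i=1}^r R_i$, where $R_i$ is the ring of integers of $K_i$.

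Finally I would check that $R$ has finite index in $\tilde R$. Classical algebraic number theory provides that each $R_i$ is a free $\z$-module of rank $[K_i:\qu]$, so $\tilde R$ is free of rank $\sum_i [K_i:\qu] = n$, matching the rank of $R$. Two free $\z$-submodules of $R \otimes \qu$ of the same finite rank, one contained in the other, differ by a subgroup of finite index; this gives $[\tilde R : R] < \infty$ and completes the proof. There is no genuine obstacle here, only a bit of bookkeeping to keep the several notions of integral closure (of $\z$ in $R \otimes \qu$, of $R$ in $R \otimes \qu$, and of $R$ in $\tilde R$ itself) aligned, which they do thanks to transitivity of integral extensions.
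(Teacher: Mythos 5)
Your proposal is correct and follows essentially the same route as the paper: decompose $R\otimes\qu$ into a product of number fields, use transitivity of integrality to identify the integral closure $\tilde R$ of $R$ with that of $\z$, namely $\prod_i R_i$, and conclude finiteness of the index from the equality of $\z$-ranks. The extra details you supply (injectivity of $a\mapsto a\otimes 1$, compatibility of integral closure with finite products of fields) are exactly the bookkeeping the paper leaves implicit.
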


The {\sl discriminant} $\delta_R$ of a finite $S_\qu$--ring $R$ is defined to be the
determinant of the bilinear form
$$R\times R\ni(x,y)\longrightarrow\tr(xy)\in\z,$$
where, for $a\in R$, $\tr(a)$ denotes the trace of the $\qu$--linear map
$$l_a:R\otimes\qu\longrightarrow R\otimes\qu$$
given by multiplication by $a\otimes 1$. It is easy to show that
\begin{equation}\tr(a)=\sum_{\sigma}\sigma(a),
\end{equation}
where the sum is extended to all the ring homomorphisms $\sigma:R\rightarrow\overline\qu$.

If $R$ is the ring of integers of a number field $K$, then $\delta_R$ coincides
with the discriminant $\delta_K$ of $K$.

The discriminant is multiplicative on any finite products of finite $S_\qu$--rings, and
if $R'\subset R$ is a subring of finite index $d$, then $\delta_{R'}=\delta_Rd^2$.
In particular $\delta_R\neq 0$ for any finite $S_\qu$--ring $R$, since $\delta_K\neq 0$
for any number field $K$. If $h(x)\in\z[x]$ is a monic, square free polynomial of
discriminant $\delta_h$, then $\delta_{R_h}=\delta_h$ (cf. \cite{Ma}, Theorem $8$).

\section{Discriminants and $\fbar$--valued points of $\spe (R)$}\label{secu}

The goal of this section is to prove Theorem \ref{prop} which, for a finite $S_\qu$--ring $R$, gives a lower
bound for the number of $\fbar$--valued points of $\spe (R)$, in terms of the
$p$--adic valuation of the discriminant of $R$.  We also obtain a criterion (Proposition \ref{corp}) which
gives a sufficient condition for the index of a monogenic subring $\z[T]\subset R$ to be prime to $p$. 

For a prime number $p$, let $\nu_p$ denote the additive $p$--adic valuation
of $\qu_p$, normalized so that $\nu_p(p)=1$.

\begin{lem}\label{lemu} Let $R$ be the ring of integers of a number field $K$ of degree $n$
over $\qu$ and of discriminant $\delta_K$. If $p$ is any prime, let $f_p$ be the number
of $\fbar$--valued points of $\spe (R)$. Then
$$f_p\geq n-\nu_p(\delta_K).$$
Moreover, equality holds if and only if $p$ is tamely ramified in $R$.
\end{lem}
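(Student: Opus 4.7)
The plan is to translate both sides of the inequality into local invariants attached to the primes of $R$ above $p$, and then invoke the standard comparison between different exponents and ramification indices.

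First I would identify $f_p$ combinatorially. An $\fbar$-valued point of $\spe(R)$ is a ring homomorphism $R\to\fbar$; such a map factors through $R/pR$, and by the Chinese Remainder Theorem
\[
R/pR\simeq\prod_{\mathfrak{p}\mid p} R/\mathfrak{p}^{e_\mathfrak{p}},
\]
so every such homomorphism factors through a unique residue field $R/\mathfrak{p}\simeq \f_{p^{f_\mathfrak{p}}}$. Counting embeddings $\f_{p^{f_\mathfrak{p}}}\hookrightarrow\fbar$ gives exactly $f_\mathfrak{p}$ maps per prime, so
\[
f_p=\sum_{\mathfrak{p}\mid p} f_\mathfrak{p}.
\]
Combined with the fundamental identity $n=\sum_{\mathfrak{p}\mid p} e_\mathfrak{p} f_\mathfrak{p}$, this rewrites $n-f_p$ as $\sum_\mathfrak{p}(e_\mathfrak{p}-1)f_\mathfrak{p}$.

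Next I would compute $\nu_p(\delta_K)$ via the different. By the conductor–discriminant formula applied prime by prime,
\[
\nu_p(\delta_K)=\sum_{\mathfrak{p}\mid p} d_\mathfrak{p}\, f_\mathfrak{p},
\]
where $d_\mathfrak{p}$ is the exponent of $\mathfrak{p}$ in the different $\mathfrak{d}_{K/\qu}$. The key input, which I would quote from standard local-field theory (e.g.\ Serre, \emph{Local Fields}, Ch.\ III), is the inequality $d_\mathfrak{p}\ge e_\mathfrak{p}-1$, with equality if and only if the local extension at $\mathfrak{p}$ is tamely ramified.

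Combining the two displayed expressions gives
\[
\nu_p(\delta_K)=\sum_{\mathfrak{p}\mid p} d_\mathfrak{p}\, f_\mathfrak{p}\;\ge\;\sum_{\mathfrak{p}\mid p}(e_\mathfrak{p}-1)f_\mathfrak{p}=n-f_p,
\]
which is the desired bound. Equality across the chain forces $d_\mathfrak{p}=e_\mathfrak{p}-1$ for every $\mathfrak{p}\mid p$, which is precisely the statement that $p$ is tamely ramified in $R$. The only delicate step, and where I would be most careful, is the invocation of the local different-vs-ramification inequality; everything else is bookkeeping with the factorization of $pR$.
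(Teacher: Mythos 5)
Your proof is correct and follows essentially the same route as the paper's: both reduce the statement to the identities $f_p=\sum_{\p\mid p}f_\p$, $n=\sum_{\p\mid p}e_\p f_\p$, $\nu_p(\delta_K)=\sum_{\p\mid p}d_\p f_\p$ via the different, and then invoke $d_\p\ge e_\p-1$ with equality exactly in the tame case. (Your label ``conductor--discriminant formula'' is a slight misnomer for the fact that the discriminant is the norm of the different, but the content is the same as in the paper.)
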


\begin{proof} For a prime $\p$ of $K$ above $p$, let $f_\p$ and $e_\p$ denote, respectively,
the inertial degree and ramification index associated to $\p$. There is the well--known
formula (cf. \cite{Se}, I \S $5$, Prop. $10$)
\begin{equation}\label{localglobal} \sum_\p e_\p f_\p=n
\end{equation}
where the sum is extended to all the primes of $R$ above $p$.

Let $K_\p$ be the completion at $\p$ of $K$ and $\p^{r_\p}$ be the different of the local
extension $K_\p/\qu_p$. We know that
\begin{equation}\label{tamely} r_\p\geq e_\p-1,
\end{equation}
and equality holds if and only if $\p$ is tamely ramified (Serre, loc. cit.
III, \S $6$). The $p$--part of the discriminant $\delta_K$ is the product of the norms
of the fractional ideals $\p^{r_\p}$ of $K$, as $\p$ ranges among the prime ideals of $R$
above $p$ (Serre, loc. cit. III, \S $5$). Therefore we have
$$\nu_p(\delta_K)=\sum_\p f_\p r_\p.$$
Taking in account formula \ref{localglobal} and the inequality \ref{tamely}, we have
$$\sum_\p f_\p r_\p\geq \sum_\p f_\p (e_\p-1)=n-\sum_\p f_\p.$$
Moreover, equality holds if and only if every $\p$ is tamely ramified above $p$, that
is if and only if $p$ is tamely ramified in $K$. Observing that $\sum_\p f_\p=f_p$
concludes the proof of the lemma.
\end{proof}

We deduce two corollaries that follow from the proof of lemma \ref{lemu}.

\begin{cor}\label{coru} If $\nu_p(\delta_K)\leq p-1$ then $p$ is tamely ramified in $R$.
In particular $f_p=n-\nu_p(\delta_K)$.
\end{cor}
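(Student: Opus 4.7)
The plan is to prove the contrapositive: if $p$ is wildly ramified in $R$, then $\nu_p(\delta_K)\geq p$, which violates the hypothesis $\nu_p(\delta_K)\leq p-1$. Granting tame ramification, the ``In particular'' part is then the equality case already established in Lemma \ref{lemu}.

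First I would isolate the relevant fact from the local theory of the different. Recall that for each prime $\p$ of $R$ above $p$ with local different exponent $r_\p$, one has $r_\p\geq e_\p-1$ with equality precisely when $\p$ is tamely ramified; moreover, in the wildly ramified case the stronger inequality $r_\p\geq e_\p$ holds (since then $p\mid e_\p$, and the contribution of the wild part of inertia to the different exponent is at least $1$; cf. \cite{Se}, III \S $6$). This refinement is all that separates the present corollary from Lemma \ref{lemu}.

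Next, suppose for contradiction that some prime $\p$ of $R$ above $p$ is wildly ramified. Then $p\mid e_\p$, so $e_\p\geq p$, and the inequality above gives $r_\p\geq e_\p\geq p$. Since $f_\p\geq 1$ and all the terms in the sum
\[
\nu_p(\delta_K)=\sum_{\p\mid p}f_\p r_\p
\]
are non--negative, we obtain $\nu_p(\delta_K)\geq f_\p r_\p\geq p$, contradicting the assumption $\nu_p(\delta_K)\leq p-1$. Hence every prime $\p$ of $R$ above $p$ is tamely ramified, i.e.\ $p$ is tamely ramified in $R$.

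Finally, having established tame ramification, the equality portion of Lemma \ref{lemu} immediately yields $f_p=n-\nu_p(\delta_K)$. The only mildly delicate point is the input from local ramification theory used in the first paragraph, namely the bound $r_\p\geq e_\p$ in the wild case; but this is standard and can be cited from Serre, so no genuine obstacle remains.
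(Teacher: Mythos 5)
Your proof is correct and follows essentially the same route as the paper: both argue by contradiction/contraposition from the fact that a wildly ramified prime $\p$ above $p$ satisfies $p\mid e_\p$ (hence $e_\p\geq p$) together with the strict inequality $r_\p>e_\p-1$ in the wild case, forcing $\nu_p(\delta_K)\geq f_\p r_\p\geq p$. The concluding equality $f_p=n-\nu_p(\delta_K)$ is then obtained exactly as you do, from the equality case of Lemma \ref{lemu}.
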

\begin{proof} Assume that $p$ is not tamely ramified in $K$, then there exists a prime
$\p_0$ of $R$ above $p$ so that $p|e_{\p_0}$ and, in the notation used in the proof of
lemma \ref{lemu}, $r_{\p_0}>e_{\p_0}-1$. In particular
$$r_{\p_0}>e_{\p_0}-1\geq p-1.$$
By the proof of lemma \ref{lemu}, we obtain
$$\nu_p(\delta_K)=\sum_\p f_\p r_\p>p-1,$$
which completes the proof of the corollary.
\end{proof}

\begin{cor}\label{cord} If $\nu_p(\delta_K)=1$ then there exists exactly one prime $\p_0$ of $R$
that lies above $p$ and that is ramified. We have $e_{\p_0}=2$, $f_{\p_0}=1$, and
$\spe (R)$ has exactly $n-1$ distinct $\fbar$--valued points.
\end{cor}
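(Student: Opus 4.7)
The plan is to leverage Lemma \ref{lemu} and Corollary \ref{coru} directly, since the hypothesis $\nu_p(\delta_K)=1$ is well within the range $\leq p-1$ in which $p$ is automatically tamely ramified. The main content will be to unpack what the equation $\nu_p(\delta_K)=1$ forces on the local invariants $e_\p, f_\p$ attached to primes of $R$ above $p$.

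First I would invoke Corollary \ref{coru}: because $\nu_p(\delta_K)=1\leq p-1$, every prime $\p$ of $R$ above $p$ is tamely ramified, so by (\ref{tamely}) we have the equality $r_\p=e_\p-1$ for all such $\p$. Substituting into the discriminant formula recalled in the proof of Lemma \ref{lemu} gives
\begin{equation*}
1=\nu_p(\delta_K)=\sum_\p f_\p r_\p=\sum_\p f_\p(e_\p-1).
\end{equation*}

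Next I would analyse this equation. Each summand $f_\p(e_\p-1)$ is a non--negative integer, so exactly one prime, call it $\p_0$, contributes to the sum, and it satisfies $f_{\p_0}(e_{\p_0}-1)=1$. The only way to factor $1$ in $\z_{\geq 0}$ is $f_{\p_0}=1$ and $e_{\p_0}-1=1$, hence $e_{\p_0}=2$ and $f_{\p_0}=1$. For every other prime $\p\neq\p_0$ above $p$ one must have $e_\p-1=0$, i.e.\ $\p$ is unramified; thus $\p_0$ is the unique ramified prime above $p$.

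Finally, to count $\fbar$--points of $\spe(R)$, I would apply the equality case of Lemma \ref{lemu}: since $p$ is tamely ramified in $R$, the bound in the lemma is sharp and
\begin{equation*}
f_p=n-\nu_p(\delta_K)=n-1.
\end{equation*}
No step presents a real obstacle; the only subtlety worth highlighting is that the bound $\nu_p(\delta_K)\leq p-1$ of Corollary \ref{coru} is what lets us pass freely between the discriminant exponent and the ramification data, which is precisely what makes the arithmetic of the equation $\sum_\p f_\p(e_\p-1)=1$ completely rigid.
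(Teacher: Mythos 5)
Your proof is correct and follows essentially the same route as the paper: both invoke Corollary \ref{coru} to get tameness from $\nu_p(\delta_K)=1\leq p-1$ and then combine the discriminant formula $\nu_p(\delta_K)=\sum_\p f_\p r_\p$ with formula (\ref{localglobal}) from the proof of Lemma \ref{lemu}. The only (immaterial) difference is the order of deductions -- the paper first gets $f_p=n-1$ and then extracts the unique ramified prime from $\sum_\p e_\p f_\p=n$ versus $\sum_\p f_\p=n-1$, while you read off $e_{\p_0}=2$, $f_{\p_0}=1$ directly from $\sum_\p f_\p(e_\p-1)=1$.
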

\begin{proof} By assumption $\nu_p(\delta_K)=1\leq p-1$, therefore corollary \ref{coru}
ensures that $p$ is tamely ramified in $R$. Applying lemma \ref{lemu} we obtain that
the number $f_p$ of distinct $\fbar$--valued points of $\spe (R)$ is
$$f_p=n-\nu_p(\delta_K)=n-1,$$
and the last part of the corollary follows. To see the first part, observe that $f_p$
is equal to the sum $\sum f_\p$ of the inertial degrees of the primes of $R$
of residual characteristic $p$. But since $f_p=n-1$, we easily see that
formula \ref{localglobal} forces the existence of exactly one ramified prime above
$p$, say $\p_0$, and for which, moreover, we must have $e_{\p_0}=2$ and $f_{\p_0}=1$.
\end{proof}

In order to prove theorem \ref{prop} we need the following

\begin{lem}\label{fini} Let $R'\subset R$ be an extension of finite $S_\qu$--rings so that
$R'$ has finite index $d$ in $R$. Let $f_p$ and $f_p'$ be the numbers of
$\fbar$--valued points of, respectively, $\spe (R)$ and $\spe (R')$. Then
$$f_p\geq f_p'\geq f_p-\nu_p(d).$$
\end{lem}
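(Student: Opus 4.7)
The plan is to pass to the $\fbar$-algebras $A=R'\otimes_{\z}\fbar$ and $B=R\otimes_{\z}\fbar$, where the invariants $f_p'$ and $f_p$ can be read off and a single induced map can be analyzed. Since $\fbar$ is algebraically closed and $A,B$ are finite-dimensional $\fbar$-algebras of dimension $n=\rank_{\z}R'=\rank_{\z}R$, their reductions $A^{\rm red}$ and $B^{\rm red}$ are products of copies of $\fbar$, with one factor per $\fbar$-valued point of $\spe(R')$ and $\spe(R)$ respectively. Hence $f_p'=\dim_{\fbar}A^{\rm red}$ and $f_p=\dim_{\fbar}B^{\rm red}$. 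Let $\phi:A\to B$ be the map induced by the inclusion $R'\subset R$.

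The key input is a dimension bound on $\ker\phi$ and $\mathrm{coker}\,\phi$. Tensoring the short exact sequence $0\to R'\to R\to R/R'\to 0$ over $\z$ with $\fbar$, and using that $R'$ is $\z$-free, yields
\[
0\to\mathrm{Tor}_1^{\z}(R/R',\fbar)\to A\xrightarrow{\phi}B\to(R/R')\otimes_{\z}\fbar\to 0.
\]
For the finite abelian group $M=R/R'$ of order $d$, both $\mathrm{Tor}_1^{\z}(M,\fbar)\cong M[p]\otimes_{\f_p}\fbar$ and $M\otimes_{\z}\fbar\cong (M/pM)\otimes_{\f_p}\fbar$ have $\fbar$-dimension equal to the number of cyclic summands of the $p$-Sylow subgroup of $M$, which is at most $\nu_p(d)$. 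Therefore $\dim_{\fbar}\ker\phi=\dim_{\fbar}\mathrm{coker}\,\phi\leq\nu_p(d)$.

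To conclude I would transfer this bound to the reductions. Since $\phi$ preserves nilpotents, it induces $\bar\phi:A^{\rm red}\to B^{\rm red}$. For the first inequality I would check that $\bar\phi$ is injective; by the correspondence between $\fbar$-algebra maps of products of copies of $\fbar$ and set maps between their $\fbar$-points, this is equivalent to the surjectivity of the restriction map $\Hom(R,\fbar)\to\Hom(R',\fbar)$, which in turn follows from going-up for the integral extension $R'\subset R$ together with the fact that any embedding of a finite residue field $k(\mathfrak{q}')$ into $\fbar$ extends to any finite overfield $k(\mathfrak{q})$. For the second inequality, the reduction surjection $B\twoheadrightarrow B^{\rm red}$ sends $\phi(A)$ onto $\bar\phi(A^{\rm red})$, and so induces a surjection $\mathrm{coker}\,\phi\twoheadrightarrow\mathrm{coker}\,\bar\phi$. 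Combining this with the injectivity of $\bar\phi$ gives
\[
f_p-f_p'=\dim_{\fbar}\mathrm{coker}\,\bar\phi\leq\dim_{\fbar}\mathrm{coker}\,\phi\leq\nu_p(d),
\]
and both inequalities of the lemma follow.

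The only delicate point is the transfer step, specifically the equivalence between surjectivity of the restriction map on $\fbar$-points and injectivity of $\bar\phi$; this is where the integrality of the extension $R'\subset R$ is really used, and once it is in place the dimension bookkeeping on cokernels does the rest.
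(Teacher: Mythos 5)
Your argument is correct and is essentially the paper's proof in different packaging: both rest on lying-over for the integral extension $R'\subset R$ to compare $\fbar$-valued points, plus the observation that the mod-$p$ fibres of $R'$ and $R$ differ in $\fbar$- (resp.\ $\f_p$-) dimension by at most $\nu_p(d)$ because $(R/R')/p(R/R')$ has order at most $p^{\nu_p(d)}$, followed by dimension bookkeeping on the reduced quotients. The only cosmetic differences are that you work with $R\otimes\fbar$ and the Tor exact sequence where the paper uses the injection $R'/(pR\cap R')\hookrightarrow R/pR$, and that you deduce injectivity of the induced map on reduced quotients from surjectivity of restriction on points, whereas the paper gets it for free from an already injective map and instead compares the dimensions of the two nilradicals.
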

\begin{proof} The extension $R'\subset R$ is finite, therefore integral,
and any $\fbar$--valued point of $\spe (R')$ can be lifted to one of $\spe (R)$
(cf. \cite{AM} Theorem $5.16$), and the first inequality $f_p\geq f_p'$ readily
follows.

To see the other inequality, note that the inclusion $R'\subset R$ induces
an injective ring homorphism
$$\iota:R'/I\hookrightarrow R/pR,$$
where $I=pR\cap R'$ is the ideal of $R'$ given by the contraction of $(p)\subset R$,
and $R'/I$ may be identified with an $\f_p$--subalgebra of $R/pR$.

The cokernel of $\iota$ is an abelian group isomorphic to
$(R/R')/p(R/R')$, we have
$$|(R/pR)/(R'/I)|=|(R/R')/p(R/R')|\leq p^{\nu_p(d)}.$$
If $n$ (resp. $n'$) is the dimension of $R/pR$ (resp. $R'/I$) over
$\f_p$, then the previous inequality implies
$$n-n'\leq\nu_p(d).$$
Let $\sqrt{0}$ (resp. $\sqrt{0}'$) be the nilradical ideal of
$R/pR$ (resp. $R'/I$), and let ${\left(\vphantom{R'}R/pR\right)}_{{\rm red}}$
(resp. ${\left(R'/I\right)}_{{\rm red}}$) be the reduced ring associated to
$R/pR$ (resp. $R'/I$). We have the following exact sequences of
$\f_p$--vector spaces:
$$0\longrightarrow\sqrt{0}\longrightarrow R/p
\longrightarrow{\left(\vphantom{R'}R/pR\right)}_{{\rm red}}\longrightarrow 0,$$
$$0\longrightarrow\sqrt{0}'\longrightarrow R'/I
\longrightarrow{\left(R'/I\right)}_{{\rm red}}\longrightarrow 0.$$
Now, the injection $R'/I\hookrightarrow R/p$ induces the inclusions
$$\sqrt{0}'\subset\sqrt{0}\hphantom{x}\textrm{and}\hphantom{x}
{\left(R'/I\right)}_{{\rm red}}\subset {\left(\vphantom{R'}R/pR\right)}_{{\rm red}}.$$
Therefore there is a natural morphism between the exact sequences above, from the
lower to the upper one, described by three inclusions. If $r$ (resp. $r'$) is
the dimension of $\sqrt{0}$ (resp. $\sqrt{0}'$), then we have
$$f_p'+r'-n'=f_p+r-n=0,$$
since $r'\leq r$, we obtain
$$f_p'=f_p - (n-n')+(r-r')\geq f_p - \nu_p(d),$$
and this completes the proof of the lemma.
\end{proof}

Lemma \ref{lemu} generalizes as follows

\begin{teo}\label{prop} Let $R$ be a finite $S_\qu$--ring of rank $n$ as $\z$--module.
If $p$ is any prime number, let $f_p$ denote the number of $\fbar$--valued points
of $\spe (R)$. Then
$$f_p\geq n-\nu_p(\delta_R).$$
Moreover, equality holds if and only if the index of $R$ in its integral closure
$\tilde R$ in $R\otimes\qu$ is prime to $p$ and $p$ is tamely ramified in each component of $R\otimes\qu$.
\end{teo}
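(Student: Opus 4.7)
The plan is to reduce to the ring-of-integers case handled by Lemma \ref{lemu} by passing through the integral closure $\tilde R$, and then to use Lemma \ref{fini} to control the change from $\tilde R$ back to $R$. Concretely, decompose $\tilde R = \prod_{i=1}^r R_i$ where $R_i$ is the ring of integers of the number field $K_i$, and put $n_i = [K_i : \qu]$, so that $n = \sum n_i$. Let $\tilde f_p$ denote the number of $\fbar$-valued points of $\spe(\tilde R)$ and write $d = [\tilde R : R]$, which is finite by the preceding proposition.

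The first step is to apply Lemma \ref{lemu} to each $R_i$, obtaining $f_{p,i} \geq n_i - \nu_p(\delta_{K_i})$ with equality iff $p$ is tamely ramified in $R_i$. Summing over $i$ and using $\spe(\tilde R) = \bigsqcup_i \spe(R_i)$ together with the multiplicativity of discriminants on products, one gets
$$\tilde f_p \;\geq\; n - \nu_p(\delta_{\tilde R}),$$
with equality iff $p$ is tamely ramified in every component.

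Next I apply Lemma \ref{fini} to the inclusion $R \subset \tilde R$ of index $d$, giving $\tilde f_p \geq f_p \geq \tilde f_p - \nu_p(d)$. Combining with the formula $\nu_p(\delta_R) = \nu_p(\delta_{\tilde R}) + 2\nu_p(d)$ (from $\delta_R = \delta_{\tilde R} d^2$) yields
$$f_p \;\geq\; \tilde f_p - \nu_p(d) \;\geq\; n - \nu_p(\delta_{\tilde R}) - \nu_p(d) \;=\; n - \nu_p(\delta_R) + \nu_p(d) \;\geq\; n - \nu_p(\delta_R),$$
which establishes the inequality. This chain makes the equality analysis transparent: since $\nu_p(d) \geq 0$, equality in the theorem forces $\nu_p(d) = 0$, i.e.\ $p \nmid [\tilde R : R]$; and conversely when $\nu_p(d) = 0$, Lemma \ref{fini} collapses to $f_p = \tilde f_p$, so the theorem's equality is equivalent to $\tilde f_p = n - \nu_p(\delta_{\tilde R})$, which by the first step means $p$ is tamely ramified in each $R_i$.

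The steps are largely bookkeeping; the substantive ingredients have been packaged in Lemmas \ref{lemu} and \ref{fini}. The only mildly delicate point, and the place where I would be most careful, is the equality argument: one must check that the three inequalities being chained can simultaneously be equalities exactly under the stated hypothesis, and in particular that $\nu_p(d) = 0$ really forces $f_p = \tilde f_p$ via Lemma \ref{fini} (not merely $f_p \geq \tilde f_p$, but also the reverse inequality, which is the first half of that lemma). Once this is in place, the biconditional follows by combining the equality criteria from both lemmas.
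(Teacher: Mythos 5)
Your proof is correct and follows essentially the same route as the paper: reduce to the integrally closed case via Lemma \ref{lemu} and additivity over the factors of $\tilde R$, then descend to $R$ using Lemma \ref{fini} together with $\delta_R=\delta_{\tilde R}d^2$. If anything, your equality analysis is spelled out more completely than the paper's, which only explicitly notes that $p\mid d$ forces strict inequality.
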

\begin{proof} By lemma \ref{lemu}, the inequality expressed by the proposition is
satisfied when $R$ is the ring of integers of a number field $K$. Note that the integers
$f_p$ and $\nu_p(\delta_R)$, viewed as functions of $R$, are additive with respect
to finite product of $S_\qu$--rings. Therefore the inequality
$$f_p\geq n-\nu_p(\delta_R)$$
holds for any finite $S_\qu$--ring $R$ that is isomorphic to a finite
product of rings of integers $R_i$ of number fields $K_i$, i.e. the inequality
of the proposition is proved for any finite $S_\qu$--ring $R$ that is integrally closed
in $R\otimes\qu$. In this case the second part of the proposition follows immediately
from Lemma \ref{lemu}.

Let now $R$ be any finite $S_\qu$--ring, let $\tilde R\subset R\otimes\qu$ be its
itegral closure, and let $d$ be the (finite) index $[\tilde R:R]$. If $\tilde f_p$
denote the number of $\fbar$--valued points of $\spe (\tilde R)$, then lemma
\ref{fini} applied to the extension $R\subset\tilde R$ says that
$$f_p\geq \tilde f_p-\nu_p(d).$$
We have seen that the proposition holds for $\tilde R$, therefore
$$f_p\geq n-\nu_p(\delta_{\tilde R})-\nu_p(d).$$
Since $\delta_R=\delta_{\tilde R}d^2$ we have
\begin{equation}\label{ineqdiscr}-\nu_p(\delta_{\tilde R})-\nu_p(d)\geq -\nu_p(\delta_R),
\end{equation}
and therefore
$$f_p\geq n-\nu_p(\delta_{R}),$$
completing the proof of the first part of the proposition. Now if $p$ divided $d$,
then inequality (\ref{ineqdiscr}) would certainly be strict and, consequently, $f_p$
would be strictly greater than $n-\nu_p(\delta_{R})$.
\end{proof}

The following proposition is a consequence of Theorem \ref{prop} and Lemma \ref{fini} and gives
a criterion for counting the number of $\fbar$--valued points of $\spe (R)$ in terms of numerical data encoded
in the characteristic polynomial of an element $T\in R$ that generates a finite index subring $\z[T]\subset R$.
It will be useful in our computations when $R$ is a Hecke ring $\mathbf{T}^0_k$ and
$T$ is an Hecke operator $T_\ell$.

\begin{prop}\label{corp} Let $R$ be a finite $S_\qu$--ring of rank $n$, $T\in R$ any element, and
$h(x)\in\z[x]$ its characteristic polynomial.
Assume hat the discriminant $\delta_h$ of $h(x)$ is nonzero.
Let $f_p$ be the number of $\fbar$--valued points of $\spe (R)$ and
$f_p^{(h)}$ that of the spectrum of $\z[T]=\z[x]/(h(x))$, then
$$f_p\geq f_p^{(h)}\geq n-\nu_p(\delta_h).$$
Moreover if $f_p^{(h)}=n-\nu_p(\delta_h)$, then
\begin{equation}\label{fundeq}
f_p= f_p^{(h)}= n-\nu_p(\delta_h).
\end{equation}
In this case $p$ does not divide the index $\z[T]$ in its integral closure in $\z[T]\otimes\qu=R\otimes\qu$. In particular,
$p$ does not divide the index $[R:\z[T]]$, we have $\nu_p(\delta_R)=\nu_p(\delta_h)$, and
the inclusion $\z[T]\subset R$ induces an isomorphism
$$\z[T]/p\z[T]\simeq R/pR.$$
\end{prop}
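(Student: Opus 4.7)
The plan is to reduce everything to Theorem \ref{prop} and Lemma \ref{fini} applied to $\z[T]$ viewed as a finite-index subring of $R$. First I would check that $\z[T]$ is itself a finite $S_\qu$-ring of rank $n$, isomorphic to $\z[x]/(h(x))$, with discriminant exactly $\delta_h$. Because $R$ is $\z$-free of rank $n$, Cayley--Hamilton applied to multiplication by $T$ gives $h(T)=0$, hence a surjection $\z[x]/(h(x))\twoheadrightarrow\z[T]$; and since $\delta_h\neq 0$ the polynomial $h$ is square-free, so the source is a reduced, $\z$-free ring of rank $n$ and the surjection must be an isomorphism (a surjection of torsion-free $\z$-modules of the same rank). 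The formula $\delta_{R_h}=\delta_h$ recalled at the end of Section \ref{discr} then gives $\delta_{\z[T]}=\delta_h$, and $[R:\z[T]]$ is finite since $\z[T]$ and $R$ have the same $\z$-rank.

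Next I would derive the two inequalities. Lemma \ref{fini} applied to $\z[T]\subset R$ with index $d=[R:\z[T]]$ yields $f_p\geq f_p^{(h)}\geq f_p-\nu_p(d)$, while Theorem \ref{prop} applied to the $S_\qu$-ring $\z[T]$ yields $f_p^{(h)}\geq n-\nu_p(\delta_{\z[T]})=n-\nu_p(\delta_h)$. Chaining these two estimates gives the first assertion of the proposition.

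Now assume $f_p^{(h)}=n-\nu_p(\delta_h)$. The equality clause of Theorem \ref{prop} applied to $\z[T]$ tells me that $p$ does not divide the index of $\z[T]$ in its integral closure inside $\z[T]\otimes\qu=R\otimes\qu$. Since $R$ and $\z[T]$ share this total ring of fractions, they share the same integral closure $\tilde R$, so the factorization $[\tilde R:\z[T]]=[\tilde R:R]\cdot[R:\z[T]]$ forces both factors to be prime to $p$; in particular $p\nmid d$. The multiplicativity $\delta_{\z[T]}=\delta_R\cdot d^2$ then gives $\nu_p(\delta_R)=\nu_p(\delta_h)$, and with $\nu_p(d)=0$ the right-hand inequality of Lemma \ref{fini} sharpens to $f_p^{(h)}\geq f_p$, producing the equalities $f_p=f_p^{(h)}=n-\nu_p(\delta_h)$. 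Finally, tensoring the short exact sequence $0\to\z[T]\to R\to R/\z[T]\to 0$ with $\z/p\z$ and using that $R/\z[T]$ has order $d$ prime to $p$ (so it is uniquely $p$-divisible and its $\mathrm{Tor}_1^\z(-,\z/p\z)$ vanishes) yields the last isomorphism $\z[T]/p\z[T]\simeq R/pR$.

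The main obstacle I anticipate is the bookkeeping around integral closures: one must notice that $\z[T]$ and $R$ have the same integral closure in their common total ring of fractions in order to transfer the equality clause of Theorem \ref{prop}---which is intrinsically a statement about $\z[T]$ and $\tilde{\z[T]}$---into information about the index $[R:\z[T]]$, and from there about $R$ itself. Once that identification is in hand, the rest is a short bookkeeping exercise combining discriminant multiplicativity and Lemma \ref{fini}.
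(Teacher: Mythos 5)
Your proof is correct and follows essentially the same route as the paper: identify $\z[T]$ with $\z[x]/(h(x))$ (resting, as in the paper, on the square-freeness of $h$ forcing it to equal the minimal polynomial of $T$), then combine Lemma \ref{fini}, Theorem \ref{prop} and the multiplicativity of discriminants. The only cosmetic differences are that you extract $p\nmid[R:\z[T]]$ from the equality clause of Theorem \ref{prop} applied to $\z[T]$ rather than from the forced equalities in the paper's chain of inequalities (the two are logically equivalent), and that you spell out the $\mathrm{Tor}$ argument for $\z[T]/p\z[T]\simeq R/pR$, which the paper leaves implicit.
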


The characteristic polynomial $h(x)$ of $T\in R$ alluded to in the proposition is the monic characteristic polynomial
of the endomorphism of the $\qu$--vector space $R\otimes\qu$ given by multiplication by $T\otimes 1$.

 Notice that $f_p^{(h)}$ is simply the number of distinct roots in $\fbar$ of the reduction mod $p$ of $h(x)$, and $n$ is
the degree of $h(x)$. Thus the equality $f_p^{(h)}=n-\nu_p(\delta_h)$ is a numerical condition on $h(x)$.

\begin{proof} The ring $R$ is a finite $S_\qu$-ring and has no nilpotent elements. It follows that the endomorphism of
$R\otimes\qu$ given by multiplication by $T\otimes 1$ is semisimple, meaning that its minimal polynomial
is square free. Moreover, by assumption, the characteristic polynomial $h(x)$ of $T$ is square free and we conclude that
$h(x)$ is equal to the minimal polynomial of $T$. It follows that the subring $\z[T]$ has rank $n$ as an abelian group, hence
the index $[\z[T]:R]=d$ is finite.

Lemma \ref{fini} says that
$$f_p\geq f_p^{(h)}\geq f_p-\nu_p(d),$$
from which the the first part of the Proposition follows. Theorem \ref{prop} implies that
$$f_p\geq n-\nu_p(\delta_R),$$
and, since $\delta_h=\delta_rd^2$, putting together the two inequalities yields to
\begin{equation}\label{chainin} f_p\geq f_p^{(h)}\geq f_p-\nu_p(d)\geq n-\nu_p(\delta_R)-\nu_p(d)\geq n-\nu_p(\delta_h).
\end{equation}
Notice that the last inequality to the right is {\it strict} if $p$ divides $d$.

Now if $f_p^{(h)}=n-\nu_p(\delta_h)$, then the last three inequalities of (\ref{chainin}) are forced to be equalities. This immediately
implies that $\nu_p(d)=0$ and $f_p=f_p^{(h)}$, and we see that (\ref{fundeq}) of the Proposition holds.

To complete the proof of the Proposition we are only left with showing that $p$ does not divide the index of $\z[T]$ is its
integral closure, provided that the equality $f_p^{(h)}=n-\nu_p(\delta_h)$ holds. We had just shown that $p$ does not divide
the index $[R:\z[T]]$. Replacing $R$ by its integral closure $\tilde R$ and reasoning as above we easily see that $p$ does not
divide $[\tilde R:\z[T]]$, and the Proposition follows.
\end{proof}

\begin{rem} If there exists $T\in R$ so that $\nu_p(\delta_h)\leq 1$, then one knows that
the equality $f_p^{(h)}=n-\nu_p(\delta_h)$ is automatically satisfied. This is clear if $\nu_p(\delta_h)=0$, since in that
case the reduction mod $p$ of $h(x)$ is square free, and therefore $f_p^{(h)}=n$. In the case where $\nu_p(\delta_h)=1$,
we have that $h(x)$ has multiple roots when reduced mod $p$, therefore $n> f_p^{(h)}$. On the other hand,
by Theorem \ref{prop}, we have $f_p^{(h)}\geq n-\nu_p(\delta_h)=n-1$, therefore $f_p^{(h)}=n-1$ and the equality $f_p^{(h)}=n-\nu_p(\delta_h)$ holds.
In this last case, namely when $\nu_p(\delta_R)=1$, a complete description of the ramification of the components of $R\otimes\qu$
can be given: all of them but one are unramified above $p$, moreover the ramification above $p$ in the ramified component is that described
in corollary \ref{cord}.
\end{rem}

\newpage

\begin{sideways}

\begin{tabular}{cccc|cccc|cccc}

$p$ & $L$ & $U$ & $(U-L)/p^2$&$p$ & $L$ & $U$& $(U-L)/p^2$&$p$ & $L$ & $U$& $(U-L)/p^2$\\
\hline

11	&	10	*	&	10	&	0	&	83	&	10373	*	&	10414	&	0.005951	&	179	&	111784		&	112318	&	0.016666	\\
13	&	12	*	&	12	&	0	&	89	&	12848	*	&	12936	&	0.011109	&	181	&	115920	*	&	116100	&	0.005494	\\
17	&	48	*	&	48	&	0	&	97	&	16896	*	&	16896	&	0	&	191	&	136040		&	136990	&	0.02604	\\
19	&	108	*	&	108	&	0	&	101	&	19100	*	&	19200	&	0.009802	&	193	&	140928		&	141312	&	0.010309	\\
23	&	143	*	&	154	&	0.020793	&	103	&	22236	*	&	22440	&	0.019228	&	197	&	150528	*	&	150528	&	0	\\
29	&	336	*	&	336	&	0	&	107	&	22737		&	23002	&	0.023146	&	199	&	162756	*	&	163152	&	0.009999	\\
31	&	555	*	&	570	&	0.015608	&	109	&	24300	*	&	24300	&	0	&	211	&	194355	*	&	194460	&	0.002358	\\
37	&	720	*	&	756	&	0.026296	&	113	&	27104	*	&	27216	&	0.008771	&	223	&	229215		&	229770	&	0.01116	\\
41	&	1080	*	&	1080	&	0	&	127	&	42084	*	&	42210	&	0.007812	&	227	&	231424	*	&	232102	&	0.013157	\\
43	&	1554	*	&	1554	&	0	&	131	&	42510		&	43030	&	0.030301	&	229	&	237576		&	238260	&	0.013043	\\
47	&	1656	*	&	1702	&	0.020823	&	137	&	49368	*	&	49368	&	0	&	233	&	250792	*	&	251256	&	0.008546	\\
53	&	2496	*	&	2496	&	0	&	139	&	54717		&	55338	&	0.032141	&	239	&	270725	*	&	271558	&	0.014583	\\
59	&	3393		&	3538	&	0.041654	&	149	&	63788	*	&	63936	&	0.006666	&	241	&	277680		&	278400	&	0.012396	\\
61	&	3900	*	&	3900	&	0	&	151	&	70575		&	71100	&	0.023025	&	251	&	314875	*	&	315250	&	0.005952	\\
67	&	5940	*	&	6072	&	0.029405	&	157	&	74256	*	&	75036	&	0.031644	&	257	&	337664		&	338688	&	0.015503	\\
71	&	6195	*	&	6370	&	0.034715	&	163	&	89100	*	&	89424	&	0.012194	&	263	&	362084		&	363394	&	0.018939	\\
73	&	6840	*	&	6912	&	0.01351	&	167	&	90387	*	&	90802	&	0.01488	&	269	&	388332		&	389136	&	0.01111	\\
79	&	9906		&	10062	&	0.024995	&	173	&	100620		&	101136	&	0.01724	&	271	&	411345		&	412830	&	0.02022	\\

\end{tabular}

\end{sideways}

\newpage

\begin{sideways}
\begin{tabular}{cccc|cccc|cccc}

$p$ & $L$ & $U$ & $(U-L)/p^2$&$p$ & $L$ & $U$& $(U-L)/p^2$&$p$ & $L$ & $U$& $(U-L)/p^2$\\
\hline

277	&	425040		&	425316	&	0.003597	&	389	&	1190772	*	&	1191936	&	0.007692	&	499	&	2581383		&	2582628	&	0.004999	\\
281	&	444360	*	&	444360	&	0	&	397	&	1266804		&	1267596	&	0.005025	&	503	&	2590320		&	2593834	&	0.013888	\\
283	&	468825		&	470094	&	0.015844	&	401	&	1306000	*	&	1306800	&	0.004975	&	509	&	2688336	*	&	2688336	&	0	\\
293	&	503408	*	&	504576	&	0.013605	&	409	&	1386792	*	&	1387200	&	0.002439	&	521	&	2883400	*	&	2884440	&	0.003831	\\
307	&	598995		&	600372	&	0.01461	&	419	&	1491006		&	1492678	&	0.009523	&	523	&	2972268	*	&	2973834	&	0.005725	\\
311	&	602485		&	604810	&	0.024038	&	421	&	1513260		&	1514100	&	0.004739	&	541	&	3230820		&	3231900	&	0.00369	\\
313	&	616200		&	616512	&	0.003184	&	431	&	1623250		&	1625830	&	0.013888	&	547	&	3400761	*	&	3402672	&	0.006386	\\
317	&	640532	*	&	640848	&	0.003144	&	433	&	1646352		&	1648512	&	0.01152	&	557	&	3528376	*	&	3529488	&	0.003584	\\
331	&	751245		&	752730	&	0.013554	&	439	&	1755723		&	1758132	&	0.012499	&	563	&	3643446		&	3645694	&	0.007092	\\
337	&	771456	*	&	771456	&	0	&	443	&	1766232	*	&	1766674	&	0.002252	&	569	&	3763000		&	3764136	&	0.003508	\\
347	&	842164		&	843202	&	0.00862	&	449	&	1839040	*	&	1839936	&	0.004444	&	571	&	3869730	*	&	3870870	&	0.003496	\\
349	&	857472	*	&	857820	&	0.002857	&	457	&	1939824	*	&	1940736	&	0.004366	&	577	&	3924288		&	3926016	&	0.00519	\\
353	&	886336		&	888096	&	0.014124	&	461	&	1992260	*	&	1992720	&	0.002164	&	587	&	4132765	*	&	4135402	&	0.007653	\\
359	&	933127	*	&	934738	&	0.012499	&	463	&	2061213		&	2062830	&	0.007543	&	593	&	4263584	*	&	4264176	&	0.001683	\\
367	&	1025898	*	&	1026630	&	0.005434	&	467	&	2070205	*	&	2072302	&	0.009615	&	599	&	4389918		&	4395898	&	0.016666	\\
373	&	1049040		&	1049412	&	0.002673	&	479	&	2233216		&	2237518	&	0.018749	&	601	&	4438800		&	4440000	&	0.003322	\\
379	&	1128897		&	1130598	&	0.011842	&	487	&	2399625	*	&	2400840	&	0.005122	&	607	&	4648626	*	&	4651050	&	0.006578	\\
383	&	1135686	*	&	1137214	&	0.010416	&	491	&	2406880		&	2411290	&	0.018292	&	613	&	4712400	*	&	4713012	&	0.001628	\\

\end{tabular}

\end{sideways}

\newpage

\begin{sideways}
\begin{tabular}{cccc|cccc|cccc}

$p$ & $L$ & $U$ & $(U-L)/p^2$&$p$ & $L$ & $U$& $(U-L)/p^2$&$p$ & $L$ & $U$& $(U-L)/p^2$\\
\hline

617	&	4804184	*	&	4806648	&	0.006472	&	739	&	8394012	*	&	8395488	&	0.002702	&	859	&	13187031	*	&	13188318	&	0.001744	\\
619	&	4929786		&	4932258	&	0.006451	&	743	&	8414280		&	8419474	&	0.009408	&	863	&	13215322	*	&	13220494	&	0.006944	\\
631	&	5222070	*	&	5225220	&	0.007911	&	751	&	8806875		&	8811750	&	0.008643	&	877	&	13874964	*	&	13876716	&	0.002277	\\
641	&	5393280	*	&	5393280	&	0	&	757	&	8904924	*	&	8906436	&	0.002638	&	881	&	14066800	*	&	14068560	&	0.002267	\\
643	&	5527941		&	5528904	&	0.002329	&	761	&	9047800		&	9049320	&	0.002624	&	883	&	14324121		&	14325444	&	0.001696	\\
647	&	5541065		&	5547202	&	0.01466	&	769	&	9337344		&	9338880	&	0.002597	&	887	&	14352314	*	&	14359402	&	0.009008	\\
653	&	5701088		&	5703696	&	0.006116	&	773	&	9484792		&	9486336	&	0.002583	&	907	&	15524763		&	15526122	&	0.001651	\\
659	&	5861135	*	&	5863438	&	0.005303	&	787	&	10140186	*	&	10140972	&	0.001269	&	911	&	15553265		&	15561910	&	0.010416	\\
661	&	5914260	*	&	5916900	&	0.006042	&	797	&	10401332	*	&	10402128	&	0.001253	&	919	&	16145325		&	16151292	&	0.007065	\\
673	&	6245568	*	&	6246912	&	0.002967	&	809	&	10878912	*	&	10881336	&	0.003703	&	929	&	16504480	*	&	16506336	&	0.00215	\\
677	&	6357780	*	&	6359808	&	0.004424	&	811	&	11094165		&	11097810	&	0.005541	&	937	&	16937856	*	&	16937856	&	0	\\
683	&	6529468	*	&	6531514	&	0.004385	&	821	&	11373400	*	&	11375040	&	0.002433	&	941	&	17156880	*	&	17156880	&	0	\\
691	&	6859980	*	&	6862740	&	0.00578	&	823	&	11596776	*	&	11598420	&	0.002427	&	947	&	17487756	*	&	17488702	&	0.001054	\\
701	&	7063700		&	7064400	&	0.001424	&	827	&	11624711	*	&	11627602	&	0.004227	&	953	&	17822392	*	&	17824296	&	0.002096	\\
709	&	7309392	*	&	7310100	&	0.001408	&	829	&	11712060	*	&	11712060	&	0	&	967	&	18812367		&	18817680	&	0.005681	\\
719	&	7619057		&	7625878	&	0.013194	&	839	&	12133402		&	12143458	&	0.014285	&	971	&	18853405	*	&	18857770	&	0.004629	\\
727	&	7990356	*	&	7993260	&	0.005494	&	853	&	12762960		&	12763812	&	0.00117	&	977	&	19210608	*	&	19210608	&	0	\\
733	&	8080548		&	8082012	&	0.002724	&	857	&	12943576		&	12945288	&	0.00233	&	983	&	19558985		&	19568314	&	0.009654	\\

\end{tabular}
\end{sideways}

\newpage

\begin{sideways}
\begin{tabular}{cccc|cccc|cccc}

$p$ & $L$ & $U$ & $(U-L)/p^2$&$p$ & $L$ & $U$& $(U-L)/p^2$&$p$ & $L$ & $U$& $(U-L)/p^2$\\
\hline

991	&	20249460		&	20254410	&	0.00504	&	1103	&	27672873	*	&	27678934	&	0.004981	&	1237	&	39081084	*	&	39083556	&	0.001615	\\
997	&	20418996	*	&	20418996	&	0	&	1109	&	28134336	*	&	28134336	&	0	&	1249	&	40234272		&	40235520	&	0.000799	\\
1009	&	21164976		&	21168000	&	0.00297	&	1117	&	28747044		&	28749276	&	0.001788	&	1259	&	41206419		&	41213338	&	0.004365	\\
1013	&	21422016	*	&	21422016	&	0	&	1123	&	29474940		&	29477184	&	0.001779	&	1277	&	43008856		&	43011408	&	0.001564	\\
1019	&	21800470		&	21806578	&	0.005882	&	1129	&	29684448		&	29688960	&	0.003539	&	1279	&	43544655	*	&	43552962	&	0.005078	\\
1021	&	21935100	*	&	21935100	&	0	&	1151	&	31449050	*	&	31465150	&	0.012152	&	1283	&	43618127	*	&	43622614	&	0.002725	\\
1031	&	22580175	*	&	22588930	&	0.008236	&	1153	&	31627008	*	&	31629312	&	0.001733	&	1289	&	44237648	*	&	44238936	&	0.000775	\\
1033	&	22720512	*	&	22720512	&	0	&	1163	&	32459889	*	&	32462794	&	0.002147	&	1291	&	44782350		&	44790090	&	0.004643	\\
1039	&	23336835	*	&	23343582	&	0.006249	&	1171	&	33420465	*	&	33422220	&	0.001279	&	1297	&	45067104	*	&	45069696	&	0.00154	\\
1049	&	23795888		&	23796936	&	0.000952	&	1181	&	33992260		&	33998160	&	0.00423	&	1301	&	45485700	*	&	45489600	&	0.002304	\\
1051	&	24159450		&	24161550	&	0.001901	&	1187	&	34513786		&	34520902	&	0.00505	&	1303	&	46045881		&	46051740	&	0.00345	\\
1061	&	24622740		&	24625920	&	0.002824	&	1193	&	35047184	*	&	35048376	&	0.000837	&	1307	&	46118125	*	&	46124002	&	0.00344	\\
1063	&	24992577		&	24999480	&	0.006109	&	1201	&	35756400		&	35760000	&	0.002495	&	1319	&	47392644	*	&	47409778	&	0.009848	\\
1069	&	25187712		&	25188780	&	0.000934	&	1213	&	36846012	*	&	36846012	&	0	&	1321	&	47622960		&	47625600	&	0.001512	\\
1087	&	26728632		&	26731890	&	0.002757	&	1217	&	37208384	*	&	37213248	&	0.003284	&	1327	&	48638343	*	&	48644310	&	0.003388	\\
1091	&	26776940	*	&	26782390	&	0.004578	&	1223	&	37757967	*	&	37768354	&	0.006944	&	1361	&	52097520	*	&	52097520	&	0	\\
1093	&	26927628		&	26929812	&	0.001828	&	1229	&	38325880		&	38328336	&	0.001626	&	1367	&	52778142	*	&	52791802	&	0.007309	\\
1097	&	27224640		&	27227928	&	0.002732	&	1231	&	38820645		&	38829870	&	0.006087	&	1373	&	53486048		&	53491536	&	0.002911	\\

\end{tabular}
\end{sideways}

\newpage

\begin{sideways}
\begin{tabular}{cccc|cccc|cccc}

$p$ & $L$ & $U$ & $(U-L)/p^2$&$p$ & $L$ & $U$& $(U-L)/p^2$&$p$ & $L$ & $U$& $(U-L)/p^2$\\
\hline

1381	&	54429960		&	54434100	&	0.00217	&	1499	&	69649510	*	&	69658498	&	0.003999	&	1621	&	88134480	*	&	88136100	&	0.000616	\\
1399	&	56991567		&	57002052	&	0.005357	&	1511	&	71329380		&	71349010	&	0.008597	&	1627	&	89664957		&	89669022	&	0.001535	\\
1409	&	57820928	*	&	57822336	&	0.000709	&	1523	&	73062849	*	&	73066654	&	0.00164	&	1637	&	90775096		&	90778368	&	0.001221	\\
1423	&	59981382		&	59987070	&	0.002808	&	1531	&	74704545		&	74711430	&	0.002937	&	1657	&	94151880		&	94153536	&	0.000603	\\
1427	&	60066685		&	60073102	&	0.003151	&	1543	&	76473177		&	76483200	&	0.004209	&	1663	&	95744496		&	95756130	&	0.004206	\\
1429	&	60321576		&	60325860	&	0.002097	&	1549	&	76879872		&	76881420	&	0.000645	&	1667	&	95868304	*	&	95873302	&	0.001798	\\
1433	&	60835656	*	&	60835656	&	0	&	1553	&	77474288		&	77480496	&	0.002574	&	1669	&	96213576		&	96218580	&	0.001796	\\
1439	&	61588821		&	61605358	&	0.007986	&	1559	&	78363505	*	&	78384538	&	0.008653	&	1693	&	100438812	*	&	100438812	&	0	\\
1447	&	63065121	*	&	63074520	&	0.004488	&	1567	&	80103249	*	&	80108730	&	0.002232	&	1697	&	101152832		&	101154528	&	0.000588	\\
1451	&	63159100	*	&	63163450	&	0.002066	&	1571	&	80206590	*	&	80212870	&	0.002544	&	1699	&	102106683		&	102110928	&	0.00147	\\
1453	&	63423360		&	63424812	&	0.000687	&	1579	&	81958164		&	81962898	&	0.001898	&	1709	&	103315212		&	103320336	&	0.001754	\\
1459	&	64651365	*	&	64656468	&	0.002397	&	1583	&	82056758	*	&	82069414	&	0.00505	&	1721	&	105513400	*	&	105516840	&	0.001161	\\
1471	&	66256575	*	&	66266130	&	0.004415	&	1597	&	84262416		&	84270396	&	0.003128	&	1723	&	106495368		&	106500534	&	0.00174	\\
1481	&	67169800		&	67172760	&	0.001349	&	1601	&	84905600	*	&	84907200	&	0.000624	&	1733	&	107737328		&	107744256	&	0.002306	\\
1483	&	67895607		&	67900794	&	0.002358	&	1607	&	85857563	*	&	85868002	&	0.004042	&	1741	&	109245900	*	&	109245900	&	0	\\
1487	&	67980042	*	&	67994902	&	0.00672	&	1609	&	86185584	*	&	86188800	&	0.001242	&	1747	&	111008934	*	&	111014172	&	0.001716	\\
1489	&	68266464		&	68269440	&	0.001342	&	1613	&	86831992	*	&	86835216	&	0.001239	&	1753	&	111523560	*	&	111525312	&	0.00057	\\
1493	&	68822976	*	&	68822976	&	0	&	1619	&	87799961		&	87810478	&	0.004012	&	1759	&	113303979	*	&	113318922	&	0.004829	\\

\end{tabular}
\end{sideways}

\newpage

\begin{sideways}
\begin{tabular}{cccc|cccc}

$p$ & $L$ & $U$ & $(U-L)/p^2$&$p$ & $L$ & $U$& $(U-L)/p^2$\\
\hline

1777	&	116175264	*	&	116178816	&	0.001124	&	1913	&	145006080		&	145011816	&	0.001567	\\									
1783	&	118012950		&	118021860	&	0.002802	&	1931	&	149133030		&	149152330	&	0.005175	\\									
1787	&	118144793		&	118156402	&	0.003635	&	1933	&	149612148	*	&	149616012	&	0.001034	\\									
1789	&	118546188	*	&	118553340	&	0.002234	&	1949	&	153366040		&	153369936	&	0.001025	\\									
1801	&	120958200		&	120960000	&	0.000554	&	1951	&	154611600	*	&	154633050	&	0.005635	\\									
1811	&	122974115		&	122991310	&	0.005242	&	1973	&	159108848		&	159116736	&	0.002026	\\									
1823	&	125433768		&	125457454	&	0.007127	&	1979	&	160561183	*	&	160576018	&	0.003787	\\									
1831	&	127802625	*	&	127814520	&	0.003548	&	1987	&	163343535		&	163352472	&	0.002263	\\									
1847	&	130463281	*	&	130488202	&	0.007305	&	1993	&	164011320	*	&	164013312	&	0.000501	\\									
1861	&	133481040	*	&	133482900	&	0.000537	&	1997	&	164995348		&	165005328	&	0.002502	\\									
1867	&	135499590	*	&	135503322	&	0.00107	&	1999	&	166316517	*	&	166331502	&	0.003749	\\									
1871	&	135629230	*	&	135651670	&	0.00641	&		&			&		&		\\									
1873	&	136086912	*	&	136086912	&	0	&		&			&		&		\\									
1877	&	136953628		&	136963008	&	0.002662	&		&			&		&		\\									
1879	&	138118449		&	138134412	&	0.004521	&		&			&		&		\\									
1889	&	139610048	*	&	139611936	&	0.000529	&		&			&		&		\\									
1901	&	142291000		&	142294800	&	0.001051	&		&			&		&		\\									
1907	&	143639972		&	143649502	&	0.00262	&		&			&		&		\\				

\end{tabular}

\end{sideways}

\newpage

\end{document}